\journal{}
\newcommand{\eps}{\varepsilon}
\newcommand{\set}[1]{\left\{#1\right\}}
\newcommand{\p}{\partial}
\newcommand{\mB}{\mathbf{B}}
\newcommand{\mE}{\mathbf{E}}
\newcommand{\mG}{\mathbf{G}}
\newcommand{\mH}{\mathbf{H}}
\newcommand{\mU}{\mathbf{U}}
\newcommand{\mV}{\mathbf{V}}
\newcommand{\mW}{\mathbf{W}}
\newcommand{\md}{\mathbf{d}}
\newcommand{\mr}{\mathbf{r}}
\newcommand{\vt}{\boldsymbol{\theta}}
\theoremstyle{plain}
\newtheorem{thm}{Theorem}[section]
\theoremstyle{remark}
\newtheorem{rem}{Remark}[section]
\begin{document}

\begin{frontmatter}



\title{Real-time microwave imaging of unknown anomalies via scattering matrix}

\author{Won-Kwang Park}
\ead{parkwk@kookmin.ac.kr}
\address{Department of Information Security, Cryptography, and Mathematics, Kookmin University, Seoul, 02707, Korea.}

\begin{abstract}
  We consider an inverse scattering problem to identify the locations or shapes of unknown anomalies from scattering parameter data collected by a small number of dipole antennas. Most of researches does not considered the influence of dipole antennas but in the experimental simulation, they are significantly affect to the identification of anomalies. Moreover, opposite to the theoretical results, it is impossible to handle scattering parameter data when the locations of the transducer and receiver are the same in real-world application. Motivated by this, we design an imaging function with and without diagonal elements of the so-called scattering matrix. This concept is based on the Born approximation and the physical interpretation of the measurement data when the locations of the transducer and receiver are the same and different. We carefully explore the mathematical structures of traditional and proposed imaging functions by finding relationships with the infinite series of Bessel functions of integer order. The explored structures reveal certain properties of imaging functions and show why the proposed method is better than the traditional approach. We present the experimental results for small and extended anomalies using synthetic and real data at several angular frequencies to demonstrate the effectiveness of our technique.
\end{abstract}

\begin{keyword}
Microwave imaging \sep scattering matrix \sep scattering parameter data \sep Bessel functions \sep experimental results


\end{keyword}

\end{frontmatter}




\section{Introduction}
Generally, the purpose of the inverse scattering problem is to identify characteristics of unknown defects that cannot be observed directly, such as size, location, shape, and electric and magnetic properties, based on measured scattered-field or scattering parameter (or $S-$parameter) data. This is an old problem, known to be difficult due to the intrinsic ill-posedness and nonlinearity. Nevertheless, it remains an interesting problem in current science and mathematics because it has a wide range of applications, such as in breast-cancer detection \cite{HSM2,MFLPP,SKVH}, brain-stroke diagnosis \cite{MAMI,PFTYMPKE,SVMM}, ground-penetrating radar for mine detection \cite{GCGGC,HSR,TTC}, and in damage detection \cite{CPL,FFK,KJFF,VS}, which are highly related to safety and reliability issues in human life.

To solve this interesting problem, various inversion techniques and related computational methods have been investigated; examples include the Newton or Gauss-Newton method \cite{AS3,JPH,RMMP,SBSSNST}, the Levenberg-Marquadt algorithm \cite{CJGT,CM2,KSY}, the level-set technique \cite{AKM,DL,S1}, and the optimal control approach \cite{AGJKLY,AGKLS,HSZ3}, most of which are based on an iteration scheme; these techniques help in obtaining the characteristics of the target (minimizer), which minimizes the discrete norm (usually, $L^2-$norm) between the measured data in the presence of true and artificial targets at each iteration procedure. Although these techniques have been proven to be feasible in determining the characteristics of unknown targets, some conditions such as \textit{a priori} information, choice of an appropriate regularization term, and evaluation of the complex Fr{\'e}chet (or domain) derivative at each iteration step must be fulfilled to guarantee a successful procedure. Nevertheless, if one begins the iteration procedure with a bad initial guess far from the unknown target, one faces non-convergence, the local minimizer problem, and the requirement of a large computational cost. Hence, it is natural to investigate a fast algorithm for obtaining at least a good initial guess without any \textit{a priori} information about the targets.

Recently, various non-iterative techniques have been investigated, including MUltiple SIgnal classification (MUSIC) \cite{AKKLV,K1,P-MUSIC1}, direct-sampling method \cite{CILZ,IJZ1,IJZ2}, linear-sampling method \cite{CC,CGK,KR}, and topological derivatives \cite{GP,LR,P-TD3}. Subspace and Kirchhoff migrations are also known as non-iterative techniques in inverse scattering problem. When total number of directions of the incident field and corresponding scattered fields is sufficiently large, it has been confirmed that the Kirchhoff and subspace migrations operated at single and multiple time-harmonic frequencies are effective, stable, and robust non-iterative techniques. Refer to \cite{AGKPS,AGS1,BPV,HHSZ,P-SUB3} and the references therein. Following the traditional results, subspace migration based imaging algorithms were established for cases where every far-field element of the so-called scattering matrix is collectable; the total number of transducers and receivers (here, dipole antennas) are sufficiently large, and measurement data is not affected by the dipole antennas. However, in real-world application, when the locations of the transducer and receiver are the same, measurement data (diagonal elements of the scattering matrix) are influenced by not only anomalies but also by antennas (see Figure \ref{AntennaEffects}). Furthermore, sometimes manufacturing microwave systems that can measure scattered field data with the transducer and receiver at the same location is inconvenient. Hence, considering the mentioned situation, designing an alternative imaging algorithm is an interesting direction for research.

\begin{figure}[h]
\begin{center}
\subfigure[$S_{\mathrm{tot}}(n,n)$]{\includegraphics[width=0.16\textwidth]{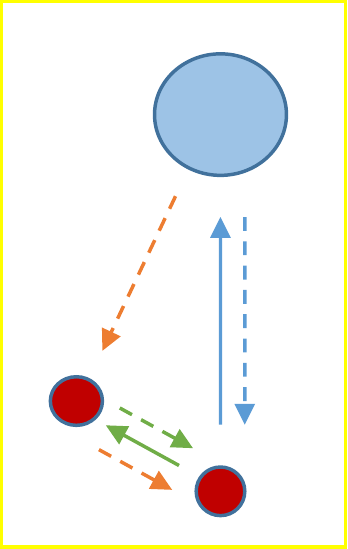}}
\subfigure[$S_{\mathrm{inc}}(n,n)$]{\includegraphics[width=0.16\textwidth]{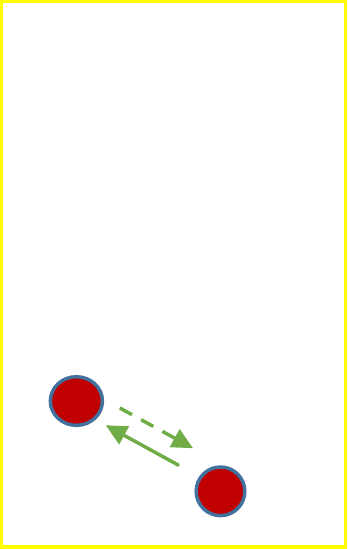}}
\subfigure[$S_{\mathrm{scat}}(n,n)$]{\includegraphics[width=0.16\textwidth]{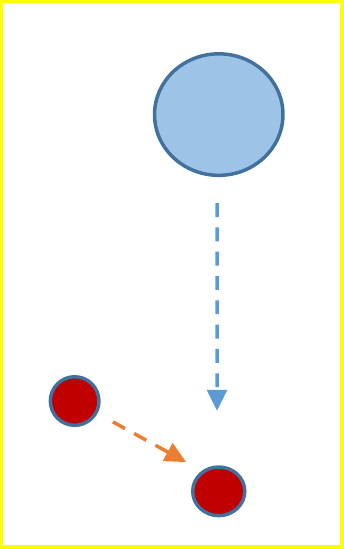}}
\subfigure[$S_{\mathrm{tot}}(m,n)$]{\includegraphics[width=0.16\textwidth]{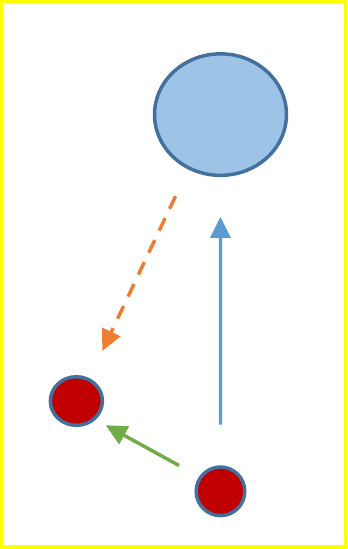}}
\subfigure[$S_{\mathrm{inc}}(m,n)$]{\includegraphics[width=0.16\textwidth]{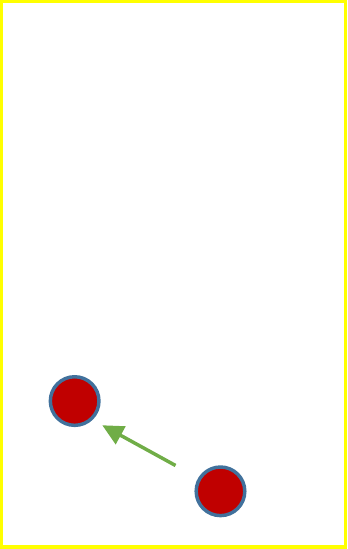}}
\subfigure[$S_{\mathrm{scat}}(m,n)$]{\includegraphics[width=0.16\textwidth]{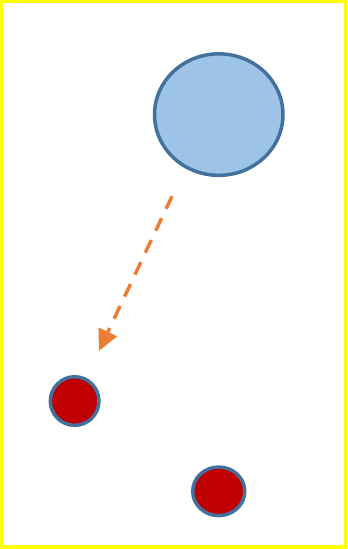}}
\caption{\label{AntennaEffects}Illustration of the influence of antenna when the locations of the transducer and receiver are the same (a)--(c) and different (d)--(f). Red-colored circles are dipole antennas $\md_n$ and $\md_m$, navy-blue colored circle is anomaly $\Sigma$. Solid and dashed arrows describe the incident and corresponding reflected fields, respectively. Definition of $S_{\mathrm{tot}}(m,n)$, $S_{\mathrm{inc}}(m,n)$, $S_{\mathrm{scat}}(m,n)$, $\md_n$, and $\Sigma$ are given in Section \ref{sec:2}.}
\end{center}
\end{figure}

Recently, the MUSIC algorithm and direct sampling method for identifying the location of the dielectric anomaly from scattering parameters collected by a small number of dipole antennas has been developed when the diagonal elements of the scattering matrix or scattering parameter with the same transducer and receiver location are measurable \cite{PKLS,P-DSM2}. In \cite{P-SUB9}, subspace migration for imaging of thin inhomogeneity without diagonal elements of so-called multi-static response (MSR) matrix whose elements are far-field pattern has been concerned. However, to the best of our knowledge, there are no theoretical results of subspace migration for real-world imaging an unknown anomaly when scattering parameter data is collected and affected by a small number of dipole antennas. Thus, in this study, we design an imaging algorithm based on the subspace migration technique with and without diagonal elements of the scattering matrix. Since, only the diagonal elements of the scattering matrix are affected by antennas, the result will be better than that with diagonal elements, and the designed approach will be useful in real-world microwave imaging. In order to verify this phenomenon, we carefully analyze imaging functions by establishing a relationship with an infinite series of Bessel function of integer order, the total number and location of dipole antennas, and the applied value of wavenumber. This is based on the Born approximation or asymptotic expansion formula in the presence of a small anomaly \cite{AK2} and also based on the physical factorization of the scattering matrix in the presence of an extended anomaly \cite{HSZ1}. From the identified structures of the imaging functions, we can compare the performance of traditional and designed imaging functions, discover various properties (such as the dependence of the number and location of antennas, the reason for the appearance of arbitrary and ring-type artifacts, etc.), and the fundamental limitations. In order to support our theoretical results, simulation results for small and extended anomalies using the synthetic data generated by the commercial CST STUDIO SUITE and real-data generated by a manufactured microwave machine at several angular frequencies are presented. It is worth mentioning that although the result obtained with low computational cost is good, it cannot completely determine the shapes of the anomalies. Fortunately, it can be accepted as an initial guess, and it will be possible to retrieve a better shape through an iteration process.

The remainder of this research is organized as follows: In Section \ref{sec:2}, we briefly introduce the forward problem and scattering parameter. Then, in Section \ref{sec:3}, we design imaging functions with and without the diagonal elements of the scattering matrix, analyze their structure by establishing infinite series of Bessel functions of integer order, and discover some properties of the imaging functions. In Section \ref{sec:4}, the set of simulation results using synthetic and real data is exhibited to show the effectiveness of the designed imaging functions. A conclusion including an outline of future work is given in Section \ref{sec:5}.

\section{Forward problem and scattering parameter}\label{sec:2}
In this section, we briefly introduce the forward problem and scattering parameter in the case where an anomaly $\Sigma$ with smooth boundary $\p\Sigma\in C^2$ is enclosed by dipole antennas located at $\md_n$, $n=1,2,\cdots,N$. Throughout this paper, we assume that $\Sigma$ is expressed as
\[\Sigma=\mr_\star+\rho\mB,\]
where $\mr_\star$ and $\rho$ denote the location and size of $\Sigma$, respectively, and $\mB$ is a simply connected smooth domain containing the origin. For the sake of simplicity, we assume that $\Sigma$ is a small ball centered at $\mr_\star$ with radius $\rho$, i.e., $\mB$ is a unit ball centered at the origin. In this paper, constitutive materials are fully characterized by their dielectric permittivity and and electrical conductivity at a given angular frequency $\omega$ so that the value of magnetic permeability is constant at every location $\mr$ such that $\mu(\mr)=\mu_{\mathrm{b}}=4\pi\cdot 10^{-7}\mathrm{H}\mathrm{m}^{-1}$. We denote the values of permittivity and conductivity as $\eps(\mr)$ and $\sigma(\mr)$, respectively, at location $\mr$. With this, we can introduce the piecewise constant dielectric permittivity and electrical conductivity 
\[\eps(\mr)=\left\{\begin{array}{rcl}
                     \medskip\eps_\star & \mbox{for} & \mr\in\Sigma,\\
                     \eps_{\mathrm{b}} & \mbox{for} & \mr\in\mathbb{R}^3\backslash\overline{\Sigma},
                   \end{array}
\right.
\quad\mbox{and}\quad
\sigma(\mr)=\left\{\begin{array}{rcl}
                     \medskip\sigma_\star & \mbox{for} & \mr\in\Sigma,\\
                     \sigma_{\mathrm{b}} & \mbox{for} & \mr\in\mathbb{R}^3\backslash\overline{\Sigma},
                   \end{array}
\right.\]
respectively. The values of the permittivities of $\Sigma$ and the background are $\eps_\star$ and $\eps_{\mathrm{b}}$, respectively. With this, we define the background wavenumber $k$ as
\[k=\omega^2\mu_{\mathrm{b}}\left(\eps_{\mathrm{b}}+i\frac{\sigma_{\mathrm{b}}}{\omega}\right).\]

Let $\mE_{\mathrm{inc}}(\md_n,\mr)$ be the incident electric field in a homogeneous medium due to a point current density at $\md_n$. From Maxwell's equations, $\mE_{\mathrm{inc}}(\md_n,\mr)$ satisfies
\[\nabla\times\mE_{\mathrm{inc}}(\md_n,\mr)=-i\omega\mu_{\mathrm{b}}\mH(\md_n,\mr)\quad\mbox{and}\quad\nabla\times\mH(\md_n,\mr)=(\sigma_{\mathrm{b}}+i\omega\eps_{\mathrm{b}})\mE_{\mathrm{inc}}(\md_n,\mr),\]
where $\mH$ denotes the magnetic field. Analogously, let $\mE_{\mathrm{tot}}(\mr,\md_n)$ be the total electric field in the presence of $\Sigma$. Then, $\mE_{\mathrm{tot}}(\mr,\md_n)$ satisfies
\[\nabla\times\mE_{\mathrm{tot}}(\mr,\md_n)=-i\omega\mu_{\mathrm{b}}\mH(\mr,\md_n)\quad\mbox{and}\quad\nabla\times\mH(\mr,\md_n)=(\sigma(\mr)+i\omega\eps(\mr))\mE_{\mathrm{tot}}(\mr,\md_n)\]
with the transmission condition on $\p\Sigma$. Following \cite{CZBN}, the total electric field $\mE_{\mathrm{tot}}(\mr,\md_n)$ can be represented as the following domain equation in $\Sigma$:
\begin{align}
\begin{aligned}\label{TotalFieldRepresentation}
\mE_{\mathrm{tot}}(\mr,\md_n)&=\mE_{\mathrm{inc}}(\md_n,\mr)+\mE_{\mathrm{scat}}(\mr,\md_n)\\
&=\mG(\md_n,\mr)+k^2\int_{\Sigma}\left(\frac{\eps(\mr')-\eps_{\mathrm{b}}}{\eps_{\mathrm{b}}}+i\frac{\sigma(\mr')-\sigma_{\mathrm{b}}}{\omega\sigma_{\mathrm{b}}}\right)\mG(\mr,\mr')\mE_{\mathrm{tot}}(\mr',\md_n)d\mr',
\end{aligned}
\end{align}
where $\mE_{\mathrm{scat}}(\mr,\md_n)$ denotes the scattered field and $\mG(\mr,\mr')$ is the Green's function for a uniform background (see \cite{JPH,AILP})
\[\mG(\mr,\mr')=\left(\mathbb{I}+\frac{1}{k^2}\nabla_{\mr'}\nabla_{\mr'}\right)\frac{e^{ik|\mr-\mr'|}}{4\pi|\mr-\mr'|}.\]
Here $\mathbb{I}$ denotes the $3\times3$ identity matrix and $\nabla_{\mr'}$ denotes differentiation with respect to $\mr'$. Hence, the inverse scattering problem serves to determine the parameter distribution $\eps(\mr)$ (or $\sigma(\mr)$) in a search domain of location $\mr'\in\Sigma$ from (\ref{TotalFieldRepresentation}) when $\mE_{\mathrm{inc}}(\md_n,\mr)$ is known. Notice that due to the nonlinearity and ill-posedness of the problem, reconstructing parameter distribution $\eps(\mr)$ (or $\sigma(\mr)$) without \textit{a priori} information of anomaly is very difficult. Thus, as an alternative, we focus on identifying the location $\mr'\in\Sigma$.

We denote the scattering parameter as $S(m,n)$, which is defined as
\[S(m,n):=\frac{\mathrm{V}_m^-}{\mathrm{V}_n^+},\]
where
$\mathrm{V}_m^-$ and $\mathrm{V}_n^+$ denote the output voltage (or reflected waves) at the $m-$th antenna and the input voltage (or incident waves) at $n-$th antenna, respectively. We let $S_{\mathrm{tot}}(m,n)$ and $S_{\mathrm{inc}}(m,n)$ be the total field and incident-field $S-$parameters (i.e., the measured $S-$parameters with and without $\Sigma$), respectively. Let $S_{\mathrm{scat}}(m,n)$ be the scattered-field $S-$parameter obtained by subtracting $S_{\mathrm{tot}}(m,n)$ and $S_{\mathrm{inc}}(m,n)$. Then, with the existence of $\Sigma$, $S_{\mathrm{scat}}(m,n)$ can be represented as follows (see \cite{HSM2}):
\begin{equation}\label{Sparameter}
S_{\mathrm{scat}}(m,n)=\frac{ik^2}{4\omega\mu_{\mathrm{b}}}\int_{\Sigma}\left(\frac{\eps(\mr)-\eps_{\mathrm{b}}}{\eps_{\mathrm{b}}}+i\frac{\sigma(\mr)-\sigma_{\mathrm{b}}}{\omega\sigma_{\mathrm{b}}}\right)\mE_{\mathrm{inc}}(\md_n,\mr)\cdot\mE_{\mathrm{tot}}(\mr,\md_m)d\mr.
\end{equation}
This representation will play a key role in the design of an imaging function based on subspace migration. Notice that in many studies, the imaging algorithm has been designed under the assumption that measurement data $S_{\mathrm{scat}}(m,n)$ is affected by anomaly $\Sigma$ only. Here, as we verified in Figure \ref{AntennaEffects}, we assume that $S_{\mathrm{scat}}(n,n)$ is affected not only by $\Sigma$, but also by $\md_n$, $n=1,2,\cdots,N$.

\section{Imaging function with and without diagonal elements of scattering matrix: introduction and analysis}\label{sec:3}
\subsection{Introduction to imaging functions}
In this section, we introduce imaging function for detecting anomaly from generated scattering matrices whose elements are measured scattered-field $S-$parameters. We separately consider the following two cases: with and without the diagonal elements of the scattering matrix.

\subsubsection{Imaging function with diagonal elements} First, we consider the following scattering matrix $\mathbb{F}$ with diagonal elements:
\begin{equation}\label{MSR}
\mathbb{F}=\left[
               \begin{array}{ccccc}
                 \medskip S_{\mathrm{scat}}(1,1) & S_{\mathrm{scat}}(1,2) & \cdots & S_{\mathrm{scat}}(1,N-1) & S_{\mathrm{scat}}(1,N) \\
                 \medskip S_{\mathrm{scat}}(2,1) & S_{\mathrm{scat}}(2,2) & \cdots & S_{\mathrm{scat}}(2,N-1) & S_{\mathrm{scat}}(2,N) \\
                 \medskip \vdots & \vdots & \ddots & \vdots & \vdots \\
                 S_{\mathrm{scat}}(N,1) & S_{\mathrm{scat}}(N,2) & \cdots & S_{\mathrm{scat}}(N,N-1) & S_{\mathrm{scat}}(N,N)
               \end{array}
             \right].
\end{equation}

Note that the wavelength $\lambda$ corresponding to the wavenumber $k$ is larger than the radius of anomaly $\Sigma$. This problem can be viewed as that of imaging a small target and, by applying the Born approximation \cite{HSM2}, formula (\ref{Sparameter}) can be approximated as follows:
\begin{align}
\begin{aligned}\label{Formula-S2}
S_{\mathrm{scat}}(m,n)&\approx\frac{ik^2}{4\omega\mu_{\mathrm{b}}}\int_{\Sigma}\left(\frac{\eps(\mr)-\eps_{\mathrm{b}}}{\eps_{\mathrm{b}}}+i\frac{\sigma(\mr)-\sigma_{\mathrm{b}}}{\omega\sigma_{\mathrm{b}}}\right)\mE_{\mathrm{inc}}(\md_n,\mr)\cdot\mE_{\mathrm{inc}}(\mr,\md_m)d\mr\\
&\approx=\rho^2\frac{ik^2\pi}{4\omega\mu_{\mathrm{b}}}\left(\frac{\eps_\star-\eps_{\mathrm{b}}}{\eps_{\mathrm{b}}}+i\frac{\sigma_\star-\sigma_{\mathrm{b}}}{\omega\sigma_{\mathrm{b}}}\right)\mE_{\mathrm{inc}}(\md_n,\mr_\star)\cdot\mE_{\mathrm{inc}}(\mr_\star,\md_m).
\end{aligned}
\end{align}
This means that the range of $\mathbb{F}$ can then be determined from the span of
\begin{equation}\label{SpanVectors}
\bigg[\mE_{\mathrm{inc}}(\md_1,\mr_\star),\mE_{\mathrm{inc}}(\md_2,\mr_\star),\cdots,\mE_{\mathrm{inc}}(\md_N,\mr_\star)\bigg]^{\mathtt{T}}
\end{equation}
corresponding to $\Sigma$.

On the basis of this observation, we introduce the imaging technique. Let us perform singular-value decomposition (SVD) on $\mathbb{F}$. Notice that, since the elements of $\mathbb{F}$ are influenced by the anomaly and antennas, there are several nonzero singular values (see Figure \ref{Distribution}). Thus, SVD of $\mathbb{F}$ can be written as
\begin{equation}\label{SVD}
\mathbb{F}=\mathbb{USV}^*=\sum_{m=1}^{N}\tau_m\mU_m\mV_m^*\approx\sum_{m=1}^{M}\tau_m\mU_m\mV_m^*,
\end{equation}
where $\tau_m$ are the singular values, $\mU_m$ and $\mV_m$ are respectively the left and right singular vectors of $\mathbb{F}$, and $\mathbb{S}$ is a real nonnegative diagonal matrix with components $\tau_1,\tau_2,\cdots,\tau_N$ satisfying
\begin{equation}\label{SingularValue}
\tau_1\geq\tau_2\geq\tau_3\geq\cdots\geq\tau_M>0\quad\mbox{and}\quad\tau_{M+1},\tau_{M+2},\cdots,\tau_N\approx0.
\end{equation}
Then, on the basis of (\ref{SpanVectors}), (\ref{SVD}), and (\ref{SingularValue}), we can introduce the following imaging function ; for a search point $\mr$,
\begin{equation}\label{ImagingFunctionFull}
\mathfrak{F}_{\mathrm{Full}}(\mr):=\left|\sum_{m=1}^{M}\left\langle\frac{\mW(\mr)}{|\mW(\mr)|},\mU_m\right\rangle\left\langle\frac{\mW(\mr)}{|\mW(\mr)|},\overline{\mV}_m\right\rangle\right|,
\end{equation}
where $\langle,\rangle$ denotes the inner product $\langle\mU,\mV\rangle:=\mU^*\mV$, $\overline{\mV}$ is the complex conjugate of $\mV$, superscript $*$ is the mark of Hermitian, and
\begin{equation}\label{TestVector}
\mW(\mr)=\bigg[\mE_{\mathrm{inc}}(\md_1,\mr),\mE_{\mathrm{inc}}(\md_2,\mr),\cdots,\mE_{\mathrm{inc}}(\md_N,\mr)\bigg]^{\mathtt{T}}.
\end{equation}
Then, the plot of $\mathfrak{F}_{\mathrm{Full}}(\mr)$ is expected to exhibit peaks of magnitude $1$ at $\mr=\mr_\star\in\Sigma$ and small magnitude at $\mr\notin\Sigma$. For a detailed discussion, we refer to \cite{AGKPS,P-SUB3}.

\begin{rem}[Application of the asymptotic expansion formula]
If the size of the anomaly is small it is possible to apply the asymptotic expansion formula from \cite{AILP,AVV} instead of the Born approximation
\[\mE_{\mathrm{tot}}(\mr,\md_n)\approx\mE_{\mathrm{inc}}(\mr,\md_n)+o(\rho^3),\]
and the imaging function can be designed in a similar manner. We refer to \cite{SKL} and Section \ref{sec:4} for a detailed discussion.
\end{rem}

\begin{rem}[Imaging of an extended anomaly]\label{RemarkExtended}
If the radius $\rho$ of $\Sigma$ is not small, we cannot apply the Born approximation, making it impossible to completely obtain the form of $\Sigma$. Note that, based on \cite{HSZ1}, $\mathbb{F}$ can be decomposed as
\begin{equation}\label{Factorization}
  \mathbb{F}=\frac{ik^2}{4\omega\mu_{\mathrm{b}}}\int_{\Sigma}\left(\frac{\eps(\mr)-\eps_{\mathrm{b}}}{\eps_{\mathrm{b}}}+i\frac{\sigma(\mr)-\sigma_{\mathrm{b}}}{\omega\sigma_{\mathrm{b}}}\right)\mathbb{E}_{\mathrm{inc}}(\mr)\mathbb{E}_{\mathrm{tot}}(\mr)d\mr,
  \end{equation}
  where
  \[\mathbb{E}_{\mathrm{inc}}(\mr)=\bigg[\mE_{\mathrm{inc}}(\md_1,\mr),\mE_{\mathrm{inc}}(\md_2,\mr),\cdots,\mE_{\mathrm{inc}}(\md_N,\mr)\bigg]^{\mathtt{T}}\]
  and
  \[\mathbb{E}_{\mathrm{tot}}(\mr)=\bigg[\mE_{\mathrm{tot}}(\mr,\md_1),\mE_{\mathrm{tot}}(\mr,\md_2),\cdots,\mE_{\mathrm{tot}}(\mr,\md_N)\bigg],\]
 respectively. Formula (\ref{Factorization}) gives a physical factorization of $\mathbb{F}$ that separates the known incident field from the unknown total field. Based on this factorization, the range of $\mathbb{F}$ is determined based on the span of $\mathbb{E}_{\mathrm{inc}}(\mr)$ corresponding to $\mr_\star\in\p\Sigma$. This means that the imaging function $\mathfrak{F}_{\mathrm{Full}}(\mr)$ of (\ref{ImagingFunctionFull}) can be defined by selecting the first $M-$singular vectors of $\mathbb{F}$, and the outline of $\p\Sigma$ will be recognized via the map of $\mathfrak{F}_{\mathrm{Full}}(\mr)$.
\end{rem}

\subsubsection{Imaging function without diagonal elements}
In some real-world applications, it is very hard to measure $S_{\mathrm{scat}}(n,n)$ for $n=1,2,\cdots,N$, because each of the $N$ antennas is used for signal transmission and the remaining $N-1$ antennas are used for signal reception. Therefore, we can use the following matrix data:
\begin{equation}\label{MSRunknown}
\left[
                 \begin{array}{ccccc}
                 \medskip \mbox{unknown} & S_{\mathrm{scat}}(1,2) & \cdots & S_{\mathrm{scat}}(1,N-1) & S_{\mathrm{scat}}(1,N) \\
                 \medskip S_{\mathrm{scat}}(2,1) & \mbox{unknown} & \cdots & S_{\mathrm{scat}}(2,N-1) & S_{\mathrm{scat}}(2,N) \\
                 \medskip \vdots & \vdots & \ddots & \vdots & \vdots \\
                 S_{\mathrm{scat}}(N,1) & S_{\mathrm{scat}}(N,2) & \cdots & S_{\mathrm{scat}}(N,N-1) & \mbox{unknown}
               \end{array}
             \right].
\end{equation}
Thus, it is impossible to identify the range of above matrix. Due to this reason, by setting $S_{\mathrm{scat}}(n,n)\equiv0$ for $n=1,2,\cdots,N$, we consider the corresponding scattering matrix $\mathbb{D}$:
\begin{equation}\label{MSRWithout}
\mathbb{D}=\left[
               \begin{array}{ccccc}
                 \medskip 0 & S_{\mathrm{scat}}(1,2) & \cdots & S_{\mathrm{scat}}(1,N-1) & S_{\mathrm{scat}}(1,N) \\
                 \medskip S_{\mathrm{scat}}(2,1) & 0 & \cdots & S_{\mathrm{scat}}(2,N-1) & S_{\mathrm{scat}}(2,N) \\
                 \medskip \vdots & \vdots & \ddots & \vdots & \vdots \\
                 S_{\mathrm{scat}}(N,1) & S_{\mathrm{scat}}(N,2) & \cdots & S_{\mathrm{scat}}(N,N-1) & 0
               \end{array}
             \right].
\end{equation}

We apply the traditional subspace migration technique as follows. Since the elements of $\mathbb{D}$ are influenced by the anomaly only, there is only one nonzero singular value (see Figure \ref{Distribution}). Thus, the SVD of $\mathbb{D}$ can be written as
\[\mathbb{D}=\mathbb{USV}^*=\sum_{m=1}^{N}\tau_m\mU_m\mV_m^*\approx\tau_1\mU_1\mV_1^*\]
and correspondingly, we can introduce the imaging function
\begin{equation}\label{ImagingFunctionWithout}
\mathfrak{F}_{\mathrm{Diag}}(\mr):=\left|\left\langle\frac{\mW(\mr)}{|\mW(\mr)|},\mU_1\right\rangle\left\langle\frac{\mW(\mr)}{|\mW(\mr)|},\overline{\mV}_1\right\rangle\right|,
\end{equation}
where $\mW(\mr)$ is defined in (\ref{TestVector}). Then, the plot of $\mathfrak{F}_{\mathrm{Diag}}(\mr)$ is expected to exhibit peaks of magnitude $1$ at $\mr=\mr_\star\in\Sigma$ and small magnitude at $\mr\notin\Sigma$. For imaging of a large anomaly, we apply the same strategy.

\subsection{Analysis of imaging functions}
Here, we carefully analyze the mathematical structure of imaging functions $\mathfrak{F}_{\mathrm{Full}}(\mr)$ and $\mathfrak{F}_{\mathrm{Diag}}(\mr)$. Notice that, from the mathematical treatment of the scattering of time-harmonic electromagnetic waves from thin infinitely long cylindrical obstacles (i.e., two-dimensional transverse-magnetic polarization), we can observe that
\[\mE_{\mathrm{inc}}(\md,\mr)=\mG(\md,\mr)=-\frac{i}{4}H_0^{(1)}(k|\md-\mr|),\]
where $H_0^{(1)}$ denotes the Hankel function of order zero of the first kind. We refer to \cite[Fig. 1]{JPH} for illustration. Then, we can obtain the following result:

\begin{thm}[Mathematical structure of imaging functions]\label{TheoremStructure}
Assume that the total number of antennas, $N$, is small. Let $\vt_n=\md_n/|\md_n|=[\cos\theta_n,\sin\theta_n]^{\mathtt{T}}$ and $\mr-\mr_\star=|\mr-\mr_\star|[\cos\phi,\sin\phi]^{\mathtt{T}}$. Then, $\mathfrak{F}_{\mathrm{Full}}(\mr)$ and $\mathfrak{F}_{\mathrm{Diag}}(\mr)$ can be represented as follows: if $\mr$ and $\md_n$ satisfies $|\md_n-\mr|,|\md_n-\mr_\star|\gg0.25/k$
\begin{align}
\label{StructureFull}\mathfrak{F}_{\mathrm{Full}}(\mr)&=\left|\left(J_0(k|\mr-\mr_\star|)+\frac{1}{N}\sum_{n=1}^{N}\Psi_1(k,n)+\sum_{m=2}^{M}J_0(k|\mr-\mr_m|)+\frac{1}{N}\sum_{n=1}^{N}\sum_{m=2}^{M}\Psi_2(k,n)\right)^2\right|,\\
\label{StructureDiag}\mathfrak{F}_{\mathrm{Diag}}(\mr)&=\frac{N}{N-1}\left|\left(J_0(k|\mr-\mr_\star|)+\frac{1}{N}\sum_{n=1}^{N}\Psi_1(k,n)\right)^2-\frac{1}{N}\left(J_0(2k|\mr-\mr_\star|)+\frac{1}{N}\sum_{n=1}^{N}\Psi_1(2k,n)\right)\right|,
\end{align}
where
\[\Psi_1(k,n)=\sum_{s\in\mathbb{Z}^*\backslash\set{0}}i^s J_{s}(k|\mr-\mr_\star|)e^{is(\theta_n-\phi)}\quad\mbox{and}\quad\Psi_2(k,n)=\sum_{s\in\mathbb{Z}^*\backslash\set{0}}i^s J_{s}(k|\mr-\mr_m|)e^{is(\theta_n-\phi_m)}.\]
Here, $\mr_m\ne\mr_\star$, $m=2,3,\cdots,M$ denotes an unknown location that depends on the locations of $\Sigma$ and $\md_n$.
\end{thm}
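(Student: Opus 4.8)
The plan is to rewrite each imaging function as a \emph{normalized bilinear form} of its data matrix evaluated at the test vector $\mW(\mr)$, and then to reduce the resulting scalar sums to Bessel series by combining the large-argument asymptotics of $H_0^{(1)}$ with the Jacobi--Anger expansion. Throughout, write $\mathbf{e}(\mr)=\mW(\mr)/|\mW(\mr)|$, with $n$-th entry $e_n(\mr)=\mE_{\mathrm{inc}}(\md_n,\mr)/|\mW(\mr)|$, and $w_n=\mE_{\mathrm{inc}}(\md_n,\mr_\star)$; let $c$ be the scalar factor appearing in (\ref{Formula-S2}), so that (using reciprocity $\mE_{\mathrm{inc}}(\mr,\md_m)=\mE_{\mathrm{inc}}(\md_m,\mr)$ and, in the two-dimensional TM reduction, scalar fields) the part of the data produced by $\Sigma$ is $S_{\mathrm{scat}}(m,n)\approx c\,w_mw_n$, while only the diagonal entries additionally carry the antenna contribution. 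Assume, as for an encircling array, that the $\md_n$ lie at a common distance $R$ from the search region, so that $|w_n|^2\approx|\mE_{\mathrm{inc}}(\md_n,\mr)|^2\approx(8\pi kR)^{-1}$ and $|\mW(\mr)|^2\approx N(8\pi kR)^{-1}$.

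For $\mathfrak{F}_{\mathrm{Diag}}(\mr)$ I would start from the observation that, with the diagonal of $\mathbb{D}$ set to zero, $\mathbb{D}=c\big(\mW(\mr_\star)\mW(\mr_\star)^{\mathtt{T}}-\mathrm{diag}(w_1^2,\dots,w_N^2)\big)$, so a one-line computation gives $\mathbb{D}\,\overline{\mW(\mr_\star)}\approx c(8\pi kR)^{-1}(N-1)\mW(\mr_\star)$; hence $\mathbb{D}$ has, to leading order, one singular triple with $\mU_1\propto\mathbf{e}(\mr_\star)$, $\overline{\mV}_1\propto\mathbf{e}(\mr_\star)$ and $\tau_1\approx|c|(8\pi kR)^{-1}(N-1)$, the factor $N-1$ being precisely the footprint of the removed diagonal. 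Since $\langle\mathbf{e}(\mr),\mU_1\rangle\langle\mathbf{e}(\mr),\overline{\mV}_1\rangle=\tau_1^{-1}\,\mathbf{e}(\mr)^*\big(\tau_1\mU_1\mV_1^*\big)\,\overline{\mathbf{e}(\mr)}$ and the rank-one truncation gives $\tau_1\mU_1\mV_1^*\approx\mathbb{D}$, this yields $\mathfrak{F}_{\mathrm{Diag}}(\mr)\approx\tau_1^{-1}\big|\mathbf{e}(\mr)^*\mathbb{D}\,\overline{\mathbf{e}(\mr)}\big|$; completing the bilinear sum over all $(m,n)$ and subtracting the diagonal, then inserting $S_{\mathrm{scat}}(m,n)\approx c\,w_mw_n$,
\[
\mathbf{e}(\mr)^*\mathbb{D}\,\overline{\mathbf{e}(\mr)}\approx c\bigg(\Big(\sum_{n=1}^{N}y_n\Big)^2-\sum_{n=1}^{N}y_n^2\bigg),\qquad y_n:=\overline{e_n(\mr)}\,w_n .
\]
Next I would insert the large-argument form $H_0^{(1)}(t)\sim\sqrt{2/(\pi t)}\,e^{i(t-\pi/4)}$, legitimate because the hypothesis forces $k|\md_n-\mr|,k|\md_n-\mr_\star|\gg0.25$, together with the far-field expansion $|\md_n-\mr|\approx|\md_n|-\vt_n\cdot\mr$, which gives $y_n\approx(8\pi kRN)^{-1/2}x_n$ with $x_n:=e^{ik\vt_n\cdot(\mr-\mr_\star)}=e^{ik|\mr-\mr_\star|\cos(\theta_n-\phi)}$. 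Tracking the constants collapses $\mathfrak{F}_{\mathrm{Diag}}(\mr)$ to $\tfrac1{N(N-1)}\big|(\sum_n x_n)^2-\sum_n x_n^2\big|=\tfrac{N}{N-1}\big|(\tfrac1N\sum_n x_n)^2-\tfrac1N\big(\tfrac1N\sum_n x_n^2\big)\big|$, and the Jacobi--Anger identity $e^{iz\cos\psi}=\sum_{s\in\mathbb{Z}}i^sJ_s(z)e^{is\psi}$, applied with $z=k|\mr-\mr_\star|$ and with $z=2k|\mr-\mr_\star|$ and with the $s=0$ terms peeled off, replaces $\tfrac1N\sum_n x_n$ by $J_0(k|\mr-\mr_\star|)+\tfrac1N\sum_n\Psi_1(k,n)$ and $\tfrac1N\sum_n x_n^2$ by $J_0(2k|\mr-\mr_\star|)+\tfrac1N\sum_n\Psi_1(2k,n)$ --- which is exactly (\ref{StructureDiag}).

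For $\mathfrak{F}_{\mathrm{Full}}(\mr)$ the argument runs in parallel, the difference being that $\mathbb{F}$ keeps its full diagonal, whose antenna contributions generate the $M-1$ significant singular values beyond the one produced by $\Sigma$. Modeling --- the only genuinely non-rigorous input --- the $m$-th of these extra singular pairs as the normalized test vector $\mathbf{e}(\mr_m)$ at an unknown, geometry-dependent location $\mr_m$ (and setting $\mr_1:=\mr_\star$), the rank-$M$ signal part of the data behaves, to leading order, like a single effective rank-one term with (essentially normalized) range vector $\sum_{m=1}^{M}\mathbf{e}(\mr_m)$; applying $\langle\mathbf{e}(\mr),\mathbf{e}(\mr_m)\rangle\approx\tfrac1N\sum_{n=1}^{N}e^{ik\vt_n\cdot(\mr-\mr_m)}$ (the same Hankel-asymptotics step as above) then gives $\mathfrak{F}_{\mathrm{Full}}(\mr)\approx\big|\big(\tfrac1N\sum_{m=1}^{M}\sum_{n=1}^{N}e^{ik\vt_n\cdot(\mr-\mr_m)}\big)^2\big|$. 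Expanding each exponential with Jacobi--Anger and peeling off the $s=0$ terms --- $J_0(k|\mr-\mr_\star|)$ when $m=1$ and $J_0(k|\mr-\mr_m|)$ when $m\ge2$ --- reproduces (\ref{StructureFull}); note that no $2k$-term occurs here, since the diagonal of $\mathbb{F}$ is present. The same scheme, starting from the physical factorization (\ref{Factorization}) of Remark~\ref{RemarkExtended} in place of the Born approximation, covers an extended $\Sigma$ with $\mr_\star\in\p\Sigma$.

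The step I expect to be the real obstacle is the identification of the signal subspaces in terms of the test vectors $\mW(\cdot)$. The rank-one computation for $\mathbb{D}$ --- including the $N-1$ scaling of $\tau_1$ and the exact $O(N^{-1})$ correction introduced in the bilinear form by the zeroed diagonal --- is under control, but justifying that each antenna-induced singular pair of $\mathbb{F}$ is faithfully represented by a test vector at a definite fictitious location $\mr_m$, and that their combined contribution reduces to the single effective rank-one expression underlying (\ref{StructureFull}), rests on a physical/perturbative reading of the measured data rather than on an exact identity; this is the point at which the hypothesis that $N$ is small, and the precise structure of the antenna effect, are genuinely used. Once the singular pairs are pinned down, the remaining ingredients --- the large-argument asymptotics of $H_0^{(1)}$ valid under $|\md_n-\mr|,|\md_n-\mr_\star|\gg0.25/k$, the far-field expansion of $|\md_n-\mr|$, the Jacobi--Anger series, and the elementary identity $(\sum_n x_n)^2-\sum_n x_n^2=\sum_{m\ne n}x_m x_n$ --- are routine.
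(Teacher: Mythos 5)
Your proposal is correct to the same standard as the paper's own argument and follows essentially the same strategy: identify the leading singular pairs of the data matrix with normalized illumination vectors $\mW(\cdot)/|\mW(\cdot)|$, reduce the imaging functions to quadratic forms in the phases $e^{ik\vt_n\cdot(\mr-\mr_\star)}$ via the large-argument Hankel asymptotics, and convert to Bessel series by Jacobi--Anger. Two points of comparison are worth recording. For $\mathfrak{F}_{\mathrm{Diag}}(\mr)$ your route is, if anything, tighter than the paper's: the paper simply asserts $\mathbb{D}\approx\tau_1\mU_1\overline{\mV}_1^{\mathtt{T}}\propto\tfrac{1}{N-1}(\cdots)$ and then splits the bilinear form into $\Phi_1-\Phi_2$ (full rank-one minus its diagonal), which is exactly your $(\sum_n y_n)^2-\sum_n y_n^2$; your explicit computation of $\mathbb{D}\,\overline{\mW(\mr_\star)}$ supplies the provenance of the $N/(N-1)$ prefactor that the paper leaves implicit. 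For $\mathfrak{F}_{\mathrm{Full}}(\mr)$ you genuinely diverge: the paper keeps the $M$ singular pairs separate, computes $\langle\mW(\mr)/|\mW(\mr)|,\mW(\mr_m)/|\mW(\mr_m)|\rangle^2$ term by term as in (\ref{term1})--(\ref{term2}), and thereby arrives at a \emph{sum of squares} $\sum_{m=1}^{M}\bigl(J_0(k|\mr-\mr_m|)+\cdots\bigr)^2$, whereas you collapse the signal subspace into a single effective rank-one term and obtain the \emph{square of the sum}, which is what (\ref{StructureFull}) literally displays. The two differ by cross terms, and it is the sum of squares that actually follows from the definition (\ref{ImagingFunctionFull}), since the $\mU_m$ are orthonormal and each pair contributes its own product of inner products; so your effective-rank-one device reproduces the printed formula rather than the computation that should underlie it, and you should either keep the pairs separate (accepting a sum of squares) or note explicitly that the cross terms are being absorbed into the unspecified artifact structure. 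Finally, you correctly locate the one genuinely non-rigorous input---the ansatz $\mU_m\approx\overline{\mV}_m\approx\mW(\mr_m)/|\mW(\mr_m)|$ for the antenna-induced singular pairs with fictitious locations $\mr_m$---which the paper likewise assumes without proof.
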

\begin{proof}
First, we derive (\ref{StructureFull}). Since $|\md_n-\mr|\gg0.25/k$ and $|\md_n-\mr_\star|\gg0.25/k$, applying the asymptotic form of the Hankel function
\[H_0^{(1)}(k|\md_n-\mr|)=\frac{1+i}{4\sqrt{k\pi}}\frac{e^{ik|\md_n|}}{\sqrt{|\md_n|}}e^{-ik\vt_n\cdot\mr}+o\left(\frac{1}{\sqrt{|\md_n|}}\right)\]
yields
\[\mathbf{U}_1\approx\overline{\mathbf{V}}_1\approx\frac{\mathbf{W}(\mr_\star)}{|\mathbf{W}(\mr_\star)|}=\frac{1}{\sqrt{N}}\bigg[e^{-ik\vt_1\cdot\mr_\star},e^{-ik\vt_2\cdot\mr_\star},\cdots,e^{-ik\vt_N\cdot\mr_\star}\bigg]^{\mathtt{T}}\]
and
\[\mathbf{U}_m\approx\overline{\mathbf{V}}_m\approx\frac{\mathbf{W}(\mr_m)}{|\mathbf{W}(\mr_m)|}=\frac{1}{\sqrt{N}}\bigg[e^{-ik\vt_1\cdot\mr_m},e^{-ik\vt_2\cdot\mr_m},\cdots,e^{-ik\vt_N\cdot\mr_m}\bigg]^{\mathtt{T}},\]
for $m=2,3,\cdots,M$. Then, we can observe that
\begin{align*}
\sum_{m=1}^{M}\left\langle\frac{\mW(\mr)}{|\mW(\mr)|},\mU_m\right\rangle\left\langle\frac{\mW(\mr)}{|\mW(\mr)|},\overline{\mV}_m\right\rangle=&\left\langle\frac{\mW(\mr)}{|\mW(\mr)|},\frac{\mW(\mr_\star)}{|\mW(\mr_\star)|}\right\rangle\left\langle\frac{\mW(\mr)}{|\mW(\mr)|},\frac{\mW(\mr_\star)}{|\mW(\mr_\star)|}\right\rangle\\
&+\sum_{m=2}^{M}\left\langle\frac{\mW(\mr)}{|\mW(\mr)|},\frac{\mW(\mr_m)}{|\mW(\mr_m)|}\right\rangle\left\langle\frac{\mW(\mr)}{|\mW(\mr)|},\frac{\mW(\mr_m)}{|\mW(\mr_m)|}\right\rangle.
\end{align*}
Now, let $\mathbb{Z}^*:=\mathbb{Z}\cup\set{-\infty,\infty}$. Since the following Jacobi-Anger expansion holds uniformly,
\begin{equation}\label{JacobiAnger}
e^{ix\cos\theta}=J_0(x)+\sum_{s\in\mathbb{Z}^*\backslash\set{0}}i^s J_{s}(x)e^{is\theta},
\end{equation}
we can evaluate
\begin{align}
\begin{aligned}\label{term1}
\left\langle\frac{\mW(\mr)}{|\mW(\mr)|},\frac{\mW(\mr_\star)}{|\mW(\mr_\star)|}\right\rangle&\left\langle\frac{\mW(\mr)}{|\mW(\mr)|},\frac{\mW(\mr_\star)}{|\mW(\mr_\star)|}\right\rangle=\left(\frac{1}{N}\sum_{n=1}^{N}e^{ik\vt_n\cdot(\mr-\mr_\star)}\right)^2=\left(\frac{1}{N}\sum_{n=1}^{N}e^{ik|\mr-\mr_\star|\cos(\theta_n-\phi)}\right)^2\\
&=\left(\frac{1}{N}\sum_{n=1}^{N}\left(J_0(k|\mr-\mr_\star|)+\sum_{s\in\mathbb{Z}^*\backslash\set{0}}i^s J_{s}(k|\mr-\mr_\star|)e^{is(\theta_n-\phi)}\right)\right)^2\\
&=\left(J_0(k|\mr-\mr_\star|)+\frac{1}{N}\sum_{n=1}^{N}\sum_{s\in\mathbb{Z}^*\backslash\set{0}}i^s J_{s}(k|\mr-\mr_\star|)e^{is(\theta_n-\phi)}\right)^2.
\end{aligned}
\end{align}
By letting $\mr-\mr_m=|\mr-\mr_m|[\cos\phi_m,\sin\phi_m]^{\mathtt{T}}$,
\begin{equation}\label{term2}
\left\langle\frac{\mW(\mr)}{|\mW(\mr)|},\frac{\mW(\mr_m)}{|\mW(\mr_m)|}\right\rangle\left\langle\frac{\mW(\mr)}{|\mW(\mr)|},\frac{\mW(\mr_m)}{|\mW(\mr_m)|}\right\rangle
=\left(J_0(k|\mr-\mr_m|)+\frac{1}{N}\sum_{n=1}^{N}\sum_{s\in\mathbb{Z}^*\backslash\set{0}}i^s J_{s}(k|\mr-\mr_m|)e^{is(\theta_n-\phi_m)}\right)^2.
\end{equation}
Combining (\ref{term1}) and (\ref{term2}), we obtain (\ref{StructureFull}).

Next, we derive (\ref{StructureDiag}). In this case,
$\mathbb{D}$ can be written as
\[\mathbb{D}\approx\rho_1\mU_1\overline{\mV}_1^{\mathtt{T}}\propto\frac{1}{N-1}
\left[\begin{array}{cccc}
\medskip 0&e^{-ik(\vt_1+\vt_2)\cdot\mr_\star}&\cdots&e^{-ik(\vt_1+\vt_N)\cdot\mr_\star}\\
e^{-ik(\vt_2+\vt_1)\cdot\mr_\star}&0&\cdots&e^{-ik(\vt_2+\vt_N)\cdot\mr_\star}\\
\medskip \vdots&\vdots&\ddots&\vdots\\
e^{-ik(\vt_N+\vt_1)\cdot\mr_\star}&e^{-ik(\vt_N+\vt_2)\cdot\mr_\star}&\cdots&0\end{array}\right].\]
Then, we can calculate
\begin{align*}
&\left\langle\frac{\mW(\mr)}{|\mW(\mr)|},\mU_1\right\rangle\left\langle\frac{\mW(\mr)}{|\mW(\mr)|},\overline{\mV}_1\right\rangle=\frac{\mW(\mr)^*}{|\mW(\mr)^*|}\mU_1\overline{\mV}_1^{\mathtt{T}}\frac{\overline{\mW}(\mr_\star)}{|\overline{\mW}(\mr_\star)|}\\
&=\frac{1}{N(N-1)}\left[\begin{array}{c}
\medskip e^{ik\vt_1\cdot\mr}\\
\medskip e^{ik\vt_2\cdot\mr}\\
\medskip\vdots\\
e^{ik\vt_N\cdot\mr}
\end{array}\right]^{\mathtt{T}}
\left[\begin{array}{cccc}
\medskip 0&e^{-ik(\vt_1+\vt_2)\cdot\mr_\star}&\cdots&e^{-ik(\vt_1+\vt_N)\cdot\mr_\star}\\
e^{-ik(\vt_2+\vt_1)\cdot\mr_\star}&0&\cdots&e^{-ik(\vt_2+\vt_N)\cdot\mr_\star}\\
\medskip \vdots&\vdots&\ddots&\vdots\\
e^{-ik(\vt_N+\vt_1)\cdot\mr_\star}&e^{-ik(\vt_N+\vt_2)\cdot\mr_\star}&\cdots&0\end{array}\right]
\left[\begin{array}{c}
\medskip e^{ik\vt_1\cdot\mr}\\
\medskip e^{ik\vt_2\cdot\mr}\\
\medskip\vdots\\
e^{ik\vt_N\cdot\mr}
\end{array}\right]\\
&=\frac{1}{N-1}(\Phi_1-\Phi_2),
\end{align*}
where
\[
\Phi_1=\frac{1}{N}\left[\begin{array}{c}
\medskip e^{ik\vt_1\cdot\mr}\\
\medskip e^{ik\vt_2\cdot\mr}\\
\medskip\vdots\\
e^{ik\vt_N\cdot\mr}
\end{array}\right]^{\mathtt{T}}
\left[\begin{array}{cccc}
\medskip e^{-ik(\vt_1+\vt_1)\cdot\mr_\star}&e^{-ik(\vt_1+\vt_2)\cdot\mr_\star}&\cdots&e^{-ik(\vt_1+\vt_N)\cdot\mr_\star}\\
e^{-ik(\vt_2+\vt_1)\cdot\mr_\star}&e^{-ik(\vt_2+\vt_2)\cdot\mr_\star}&\cdots&e^{-ik(\vt_2+\vt_N)\cdot\mr_\star}\\
\medskip \vdots&\vdots&\ddots&\vdots\\
e^{-ik(\vt_N+\vt_1)\cdot\mr_\star}&e^{-ik(\vt_N+\vt_2)\cdot\mr_\star}&\cdots&e^{-ik(\vt_N+\vt_N)\cdot\mr_\star}\end{array}\right]
\left[\begin{array}{c}
\medskip e^{ik\vt_1\cdot\mr}\\
\medskip e^{ik\vt_2\cdot\mr}\\
\medskip\vdots\\
e^{ik\vt_N\cdot\mr}
\end{array}\right]\]
and
\[
\Phi_2=\frac{1}{N}\left[\begin{array}{c}
\medskip e^{ik\vt_1\cdot\mr}\\
\medskip e^{ik\vt_2\cdot\mr}\\
\medskip\vdots\\
e^{ik\vt_N\cdot\mr}
\end{array}\right]^{\mathtt{T}}
\left[\begin{array}{cccc}
\medskip e^{-ik(\vt_1+\vt_1)\cdot\mr_\star}&0&\cdots&0\\
0&e^{-ik(\vt_2+\vt_2)\cdot\mr_\star}&\cdots&0\\
\medskip \vdots&\vdots&\ddots&\vdots\\
0&0&\cdots&e^{-ik(\vt_N+\vt_N)\cdot\mr_\star}\end{array}\right]
\left[\begin{array}{c}
\medskip e^{ik\vt_1\cdot\mr}\\
\medskip e^{ik\vt_2\cdot\mr}\\
\medskip\vdots\\
e^{ik\vt_N\cdot\mr}
\end{array}\right].\]

Based on the derivation of (\ref{StructureFull}), we can observe that
\begin{align}
\begin{aligned}\label{term3}
\Phi_1&=N\left(\frac{1}{N}\sum_{n=1}^{N}\left(J_0(k|\mr-\mr_\star|)+\sum_{s\in\mathbb{Z}^*\backslash\set{0}}i^s J_{s}(k|\mr-\mr_\star|)e^{is(\theta_n-\phi)}\right)\right)^2\\
&=N\left(J_0(k|\mr-\mr_\star|)+\frac{1}{N}\sum_{n=1}^{N}\Psi_1(k,n)\right)^2.
\end{aligned}
\end{align}
Furthermore, applying (\ref{JacobiAnger}) again, we can obtain
\begin{align}
\begin{aligned}\label{term4}
\Phi_2&=\frac{1}{N}\sum_{n=1}^{N}e^{2ik\vt_n\cdot(\mr-\mr_\star)}=\frac{1}{N}\sum_{n=1}^{N}\left(J_0(2k|\mr-\mr_\star|)+\sum_{s\in\mathbb{Z}^*\backslash\set{0}}i^s J_{s}(2k|\mr-\mr_\star|)e^{is(\theta_n-\phi)}\right)\\
&=J_0(2k|\mr-\mr_\star|)+\frac{1}{N}\sum_{n=1}^{N}\sum_{s\in\mathbb{Z}^*\backslash\set{0}}i^s J_{s}(2k|\mr-\mr_\star|)e^{is(\theta_n-\phi)}=J_0(2k|\mr-\mr_\star|)+\frac{1}{N}\sum_{n=1}^{N}\Psi_1(2k,n).
\end{aligned}
\end{align}
Finally, by combining (\ref{term3}) and (\ref{term4}), we can obtain (\ref{StructureDiag}). This completes the proof.
\end{proof}

\subsection{Discovered properties of imaging functions}\label{Remark}
Following identification of the structures (\ref{StructureFull}) and (\ref{StructureDiag}), we find the following properties of imaging functions.
\begin{enumerate}
\item Since $J_0(0)=1$ and $J_s(0)=0$ for $s=2,3,\cdots,$ $\mathfrak{F}_{\mathrm{Full}}(\mr)=1$ and $\mathfrak{F}_{\mathrm{Diag}}(\mr)=1$ at $\mr=\mr_\star$. This is why the outline of a small anomaly can be imaged via $\mathfrak{F}_{\mathrm{Full}}(\mr)$ and $\mathfrak{F}_{\mathrm{Diag}}(\mr)$.
\item Maps of $\mathfrak{F}_{\mathrm{Full}}(\mr)$ and $\mathfrak{F}_{\mathrm{Diag}}(\mr)$ will contain some artifacts due to the terms $\Psi_1$ and $\Psi_2$. In particular, due to the term $J_0(k|\mr-\mr_m|)$ of (\ref{StructureFull}), the map of $\mathfrak{F}_{\mathrm{Full}}(\mr)$ will contain more unexpected artifacts than that of $\mathfrak{F}_{\mathrm{Diag}}(\mr)$ (see Figure \ref{Result}). This is the theoretical reason why eliminating the diagonal terms of $\mathbb{F}$ guarantees a better result.
\item The total number of dipole antennas significantly contributes to the imaging performance. If one can increase the total number of antennas $N$, the effects of $\Psi_1$ and $\Psi_2$ become negligible such that the artifacts in the maps of $\mathfrak{F}_{\mathrm{Full}}(\mr)$ and $\mathfrak{F}_{\mathrm{Diag}}(\mr)$ will be reduced. Nevertheless, the map of $\mathfrak{F}_{\mathrm{Full}}(\mr)$ still contains some artifacts due to the $J_0(k|\mr-\mr_m|)$ term.
\item Just as in the previous contribution, the quality of imaging results depends on the value of applied frequency. If one applies low frequency, due to the oscillating property of Bessel functions, small amounts of artifacts will be included in the map of $\mathfrak{F}_{\mathrm{Diag}}(\mr)$, but the resolution will be poor. Otherwise, if one applies high frequency, one will obtain a good result, but more artifacts will be included in the map of $\mathfrak{F}_{\mathrm{Diag}}(\mr)$.
\end{enumerate}

\section{Simulation results using synthetic and real data}\label{sec:4}
In order to support the identified structures in Theorem \ref{TheoremStructure} and compare imaging performances, we exhibit a set of simulation results. For this, we use $N=16$ dipole antennas $\md_n$ such that
\[\md_n=0.09\mathrm{m}[\cos\theta_n,\sin\theta_n]^{\mathtt{T}},\quad\theta_n=\frac{3\pi}{2}-\frac{2\pi(n-1)}{N},\]
and we apply an $f=1$ GHz frequency, i.e., $\omega=2\pi f=2\pi\cdot10^9$ Rad/s angular frequency. To perform the imaging, we set the search domain $\Omega$ to $\Omega=0.1\mathrm{m}[-1,1]^{\mathtt{T}}\times0.1\mathrm{m}[-1,1]^{\mathtt{T}}$. For each $\mr\in\Omega$, the step size of $\mr$ is considered to be in the order of $0.001\mathrm{m}$. The material properties, locations, and sizes of anomalies, as well as the background, are written in Table \ref{Table1}. Notice that for the anomaly $\Sigma_\mathrm{S}$, since $\mu_0$ is constant, $\eps_\star=55$, and its diameter $d$ is $d=0.02\mathrm{m}$, we can observe that
\[\mbox{refractive index}\times\mbox{diameter}=\sqrt{\frac{\eps_\star}{\eps_{\mathrm{b}}}}\times d=0.0332<0.0670=\lambda=\frac{2\pi}{k}.\]
Hence, based on \cite{SKL}, $\Sigma_\mathrm{S}$ can be regarded as a small anomaly so that application of the Born approximation is valid for imaging. For the anomaly $\Sigma_\mathrm{E}$, since
\[\mbox{refractive index}\times\mbox{diameter}=\sqrt{\frac{\eps_\star}{\eps_{\mathrm{b}}}}\times d=0.0866>0.0670=\lambda=\frac{2\pi}{k},\]
it cannot be regarded as a small anomaly. Hence, motivated from \cite{HSZ1}, we let $\Sigma_\mathrm{E}$ be an extended anomaly and only its boundary can be identified via the map of $\mathfrak{F}_{\mathrm{Full}}(\mr)$ and $\mathfrak{F}_{\mathrm{Diag}}(\mr)$, refer to Remark \ref{RemarkExtended}.

It is worth mentioning that all $S-$parameters in Sections \ref{Section-Compare}, \ref{Section-Antenna}, and \ref{Section-Extended} were generated using CST STUDIO SUITE, and that those in the Sections \ref{Section-Real} and \ref{Section-Real-Limit} were generated using the developed microwave machine.

\begin{table}[h]
\begin{center}
\begin{tabular}{c||c|c|c|c}
\hline\hline  \centering Target&Permittivity&Conductivity (S/m)&~~~~Location~~~~&Radius\\
\hline\hline\centering Background&$20$&$0.2$&$-$&$-$\\
\hline\centering Anomaly $\Sigma_\mathrm{S}$&$55$&$1.2$&$0.01\mathrm{m}[1,3]^{\mathtt{T}}$&$0.010\mathrm{m}$\\
\hline\centering Anomaly $\Sigma_\mathrm{E}$&$15$&$0.5$&$0.01\mathrm{m}[1,2]^{\mathtt{T}}$&$0.050\mathrm{m}$\\
\hline\hline
\end{tabular}
\caption{\label{Table1}Electromagnetic properties, locations, and sizes of anomalies as well as background}
\end{center}
\end{table}

\begin{figure}[h]
\begin{center}
\includegraphics[width=0.495\textwidth]{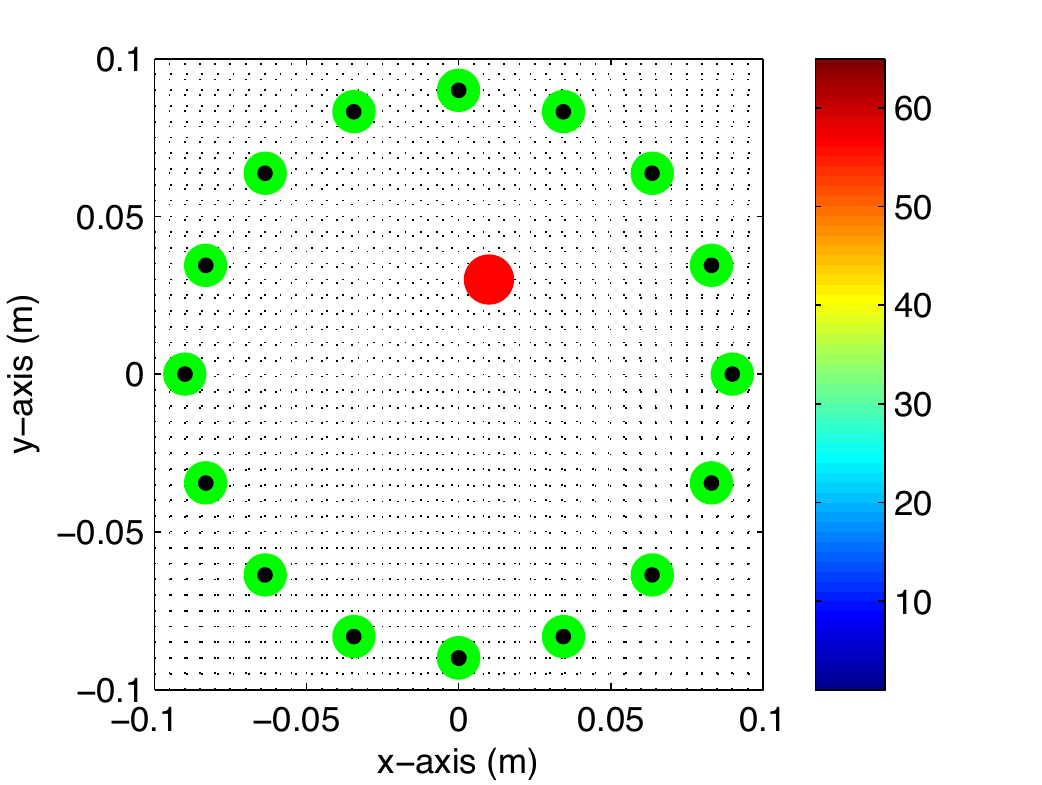}
\includegraphics[width=0.495\textwidth]{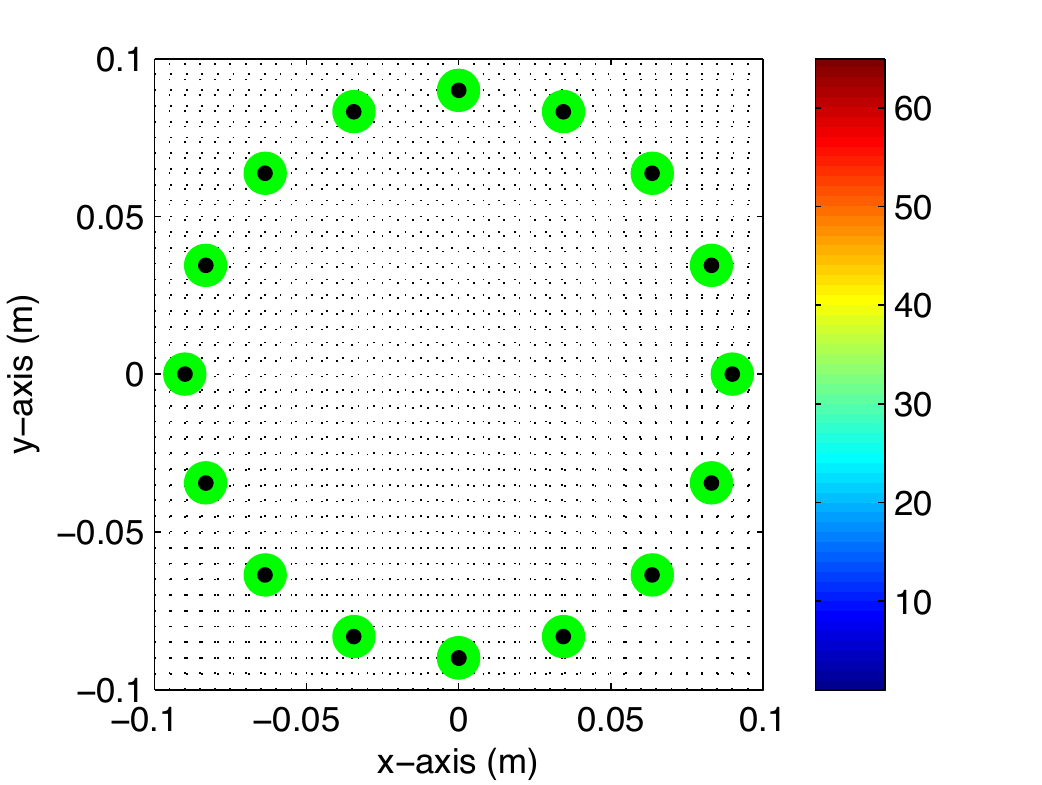}
\caption{\label{Configuration}Test configurations with (left) and without (right) an anomaly.}
\end{center}
\end{figure}

\subsection{Comparison between $\mathfrak{F}_{\mathrm{Full}}(\mr)$ and $\mathfrak{F}_{\mathrm{Diag}}(\mr)$}\label{Section-Compare}
Figure \ref{Distribution} shows the distributions of singular values of $\mathbb{F}$ and $\mathbb{D}$. Notice that, since there exists only one small anomaly, the number of nonzero singular values of $\mathbb{F}$ must be equal to $1$. However, as we discussed in Section \ref{sec:3}, due to the existence of anomalies and antennas, the first five singular values can be regarded as nonzero, such that the first five singular vectors associated with these values can be used to set the imaging function $\mathfrak{F}_{\mathrm{Full}}(\mr)$. By contrast, the number of nonzero singular values of $\mathbb{D}$ can be regarded as $1$. Thus, we can establish an imaging function $\mathfrak{F}_{\mathrm{Diag}}(\mr)$ with the first singular vector linked to the first singular value.

Figure \ref{Result} shows maps of $\mathfrak{F}_{\mathrm{Full}}(\mr)$ and $\mathfrak{F}_{\mathrm{Diag}}(\mr)$. Although the outline of $\Sigma$ can be identified, arbitrary unexpected artifacts can also be included in the map. Based on the identified structure of (\ref{StructureFull}), we can say that this is due to the existence of disturbance terms (i.e., the last three terms from the right). This result supports identified structure (\ref{StructureFull}), explaining why the outline shape of the anomaly and the unexpected artifacts are included in the map and explaining why it is quietly different from the previous results in \cite{AGKPS,P-SUB3,P-SUB5}. Fortunately, when we use the imaging function $\mathfrak{F}_{\mathrm{Diag}}(\mr)$, the shape of $\Sigma$ can be obtained clearly without disturbing unexpected artifacts. We also refer to contour lines in Figure \ref{Contourplotsmall} to compare the appearance of artifacts. Notice that ring-shaped artifacts centered at location $\mr_\star$ are included in the map of $\mathfrak{F}_{\mathrm{Diag}}(\mr)$, which is consistent with the identified structure (\ref{StructureDiag}).

\begin{figure}[h]
\begin{center}
\includegraphics[width=0.495\textwidth]{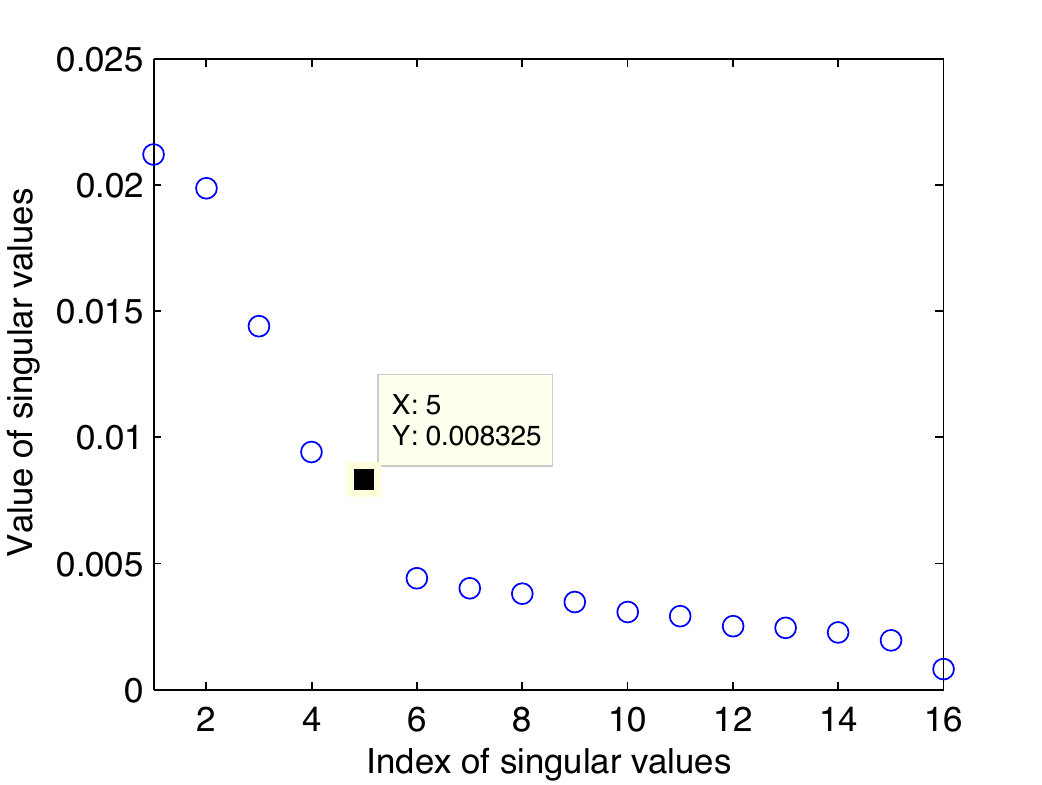}
\includegraphics[width=0.495\textwidth]{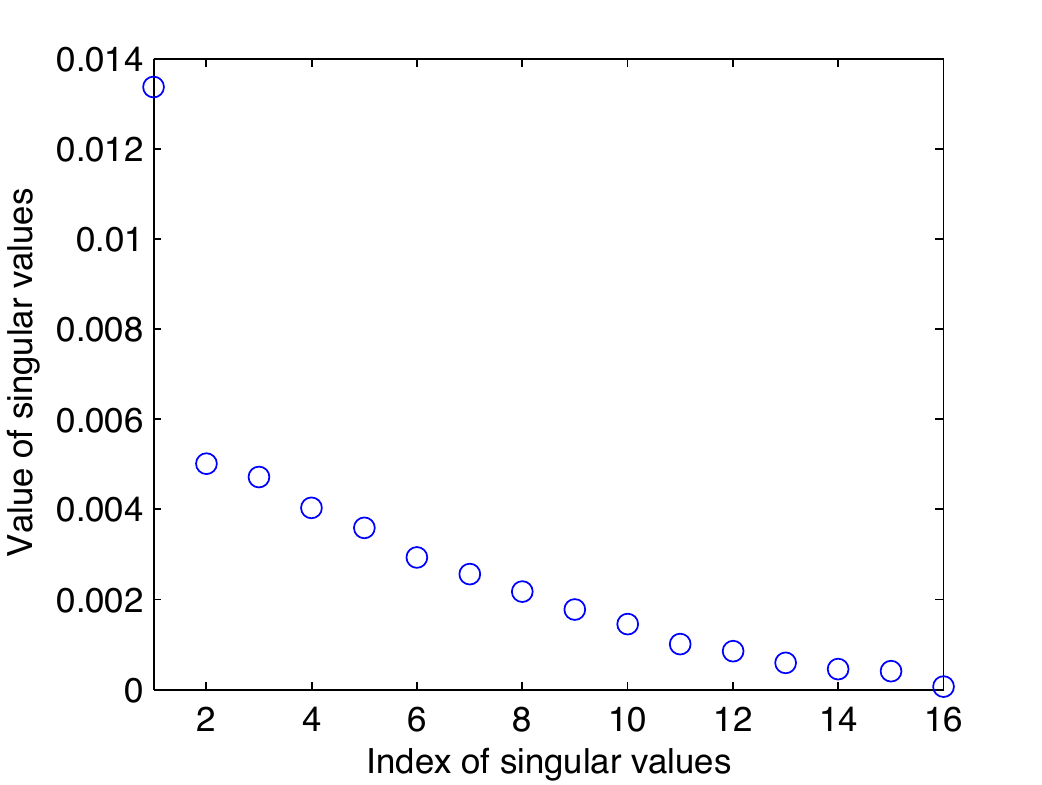}
\caption{\label{Distribution}(Section \ref{Section-Compare}) Distribution of singular values with (left) and without (right) diagonal elements of $\mathbb{F}$ at $f=1\mathrm{GHz}$.}
\end{center}
\end{figure}

\begin{figure}[h]
\begin{center}
\includegraphics[width=0.495\textwidth]{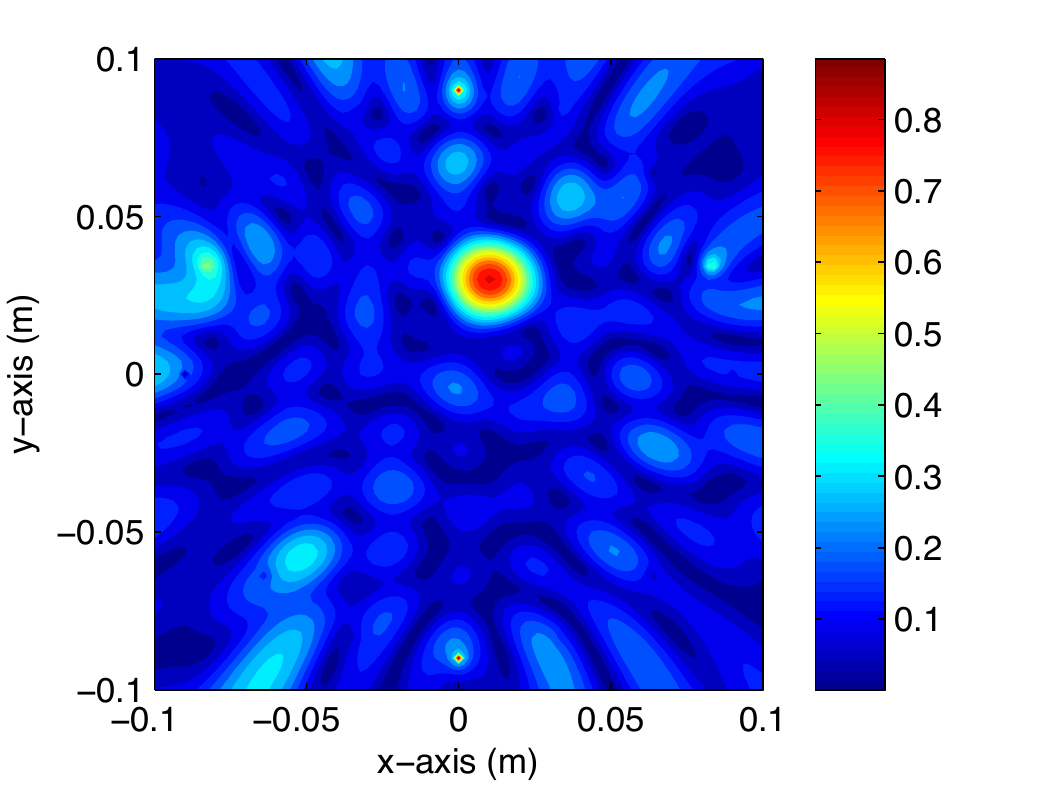}
\includegraphics[width=0.495\textwidth]{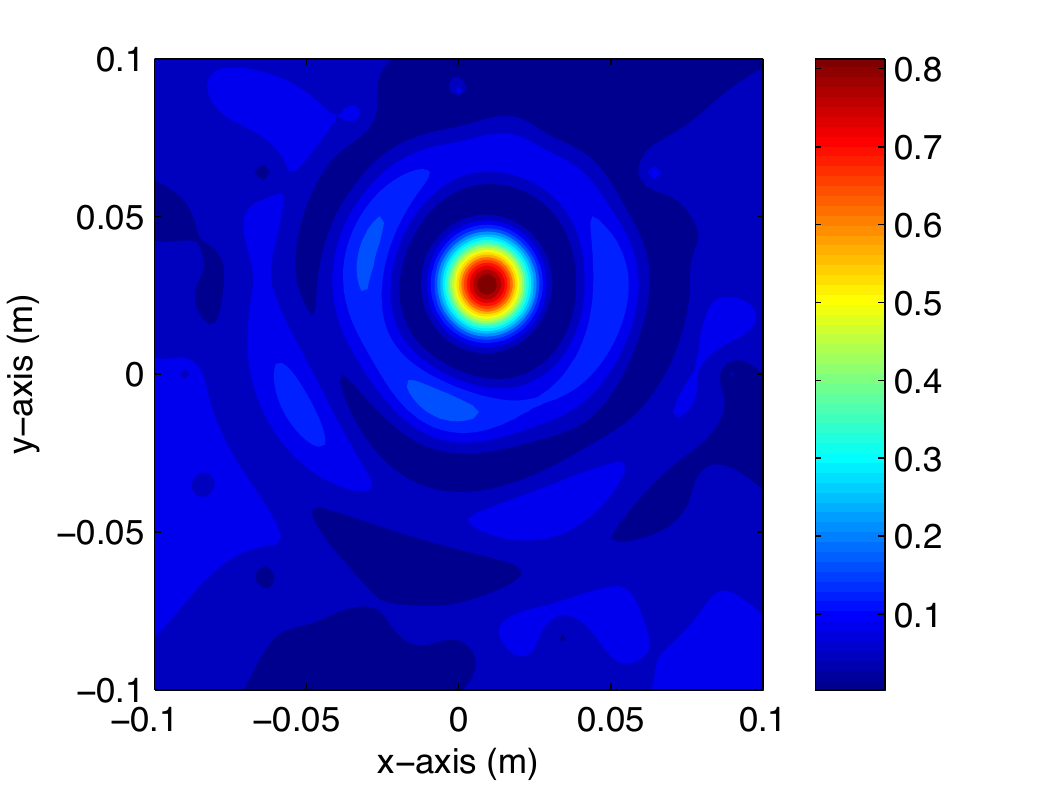}
\caption{\label{Result}(Section \ref{Section-Compare}) Maps of $\mathfrak{F}_{\mathrm{Full}}(\mr)$ (left column) and $\mathfrak{F}_{\mathrm{Diag}}(\mr)$ (right column) at $f=1\mathrm{GHz}$.}
\end{center}
\end{figure}

\begin{figure}[h]
\begin{center}
\includegraphics[width=0.495\textwidth]{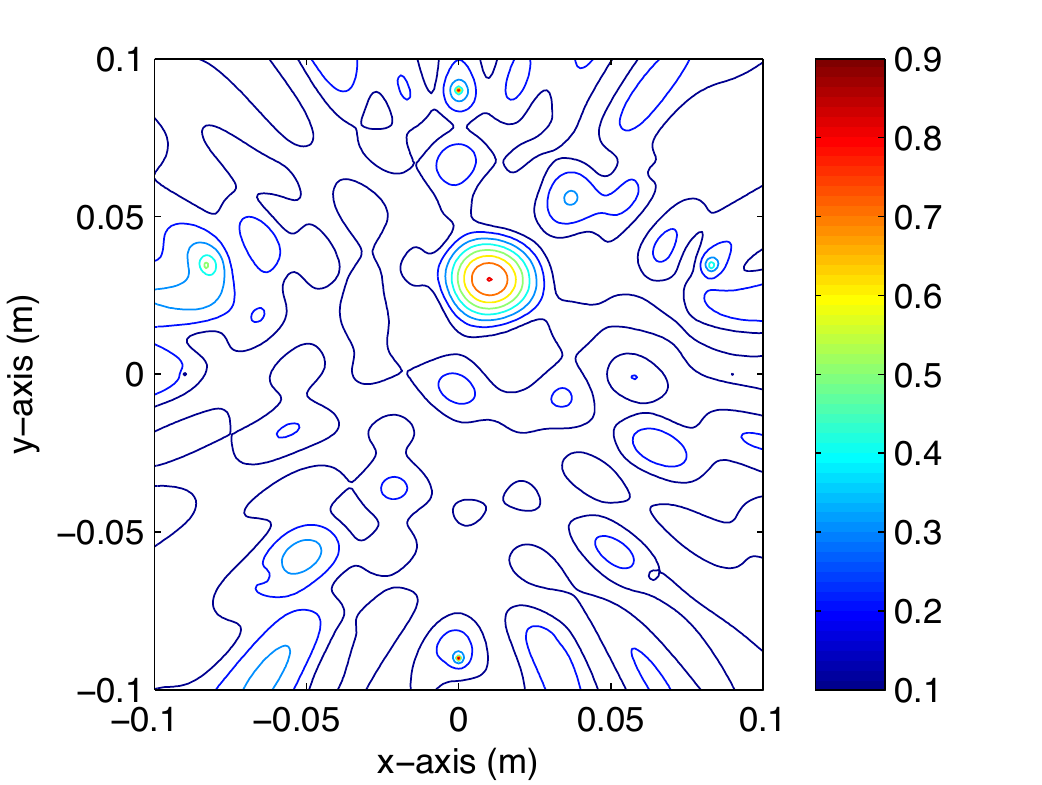}
\includegraphics[width=0.495\textwidth]{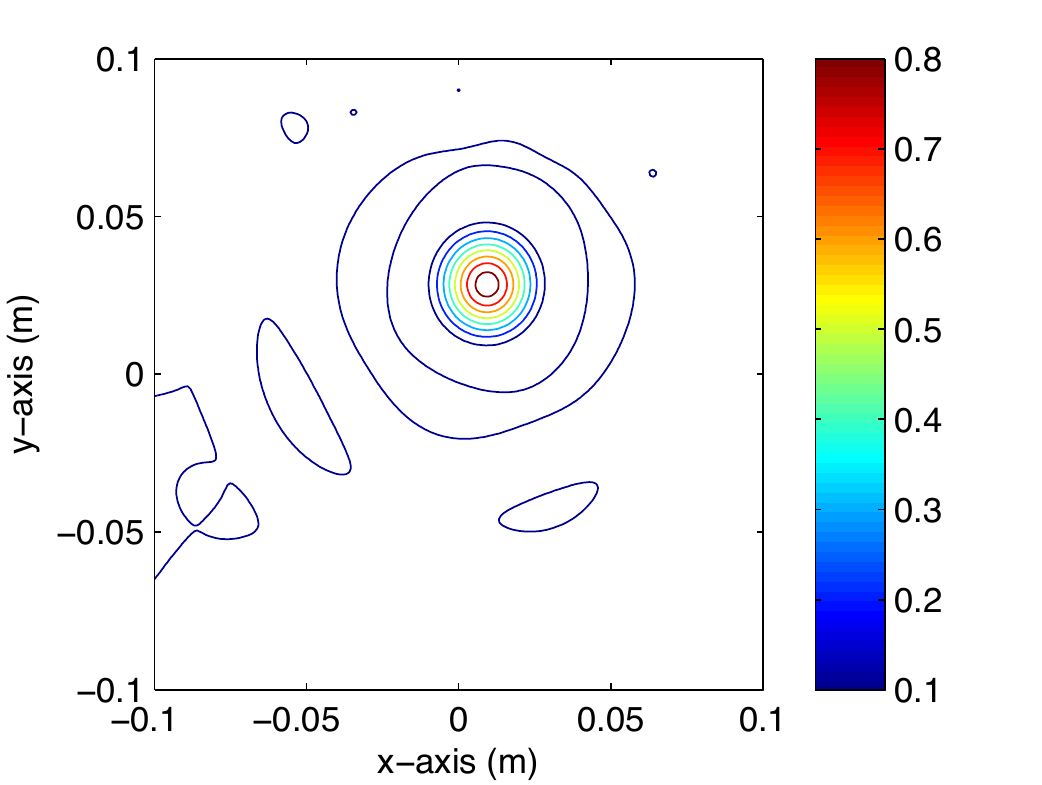}
\caption{\label{Contourplotsmall}(Section \ref{Section-Compare}) Contour lines of $\mathfrak{F}_{\mathrm{Full}}(\mr)$ (left column) and $\mathfrak{F}_{\mathrm{Diag}}(\mr)$ (right column) at $f=1\mathrm{GHz}$.}
\end{center}
\end{figure}

\subsection{Dependency of the narrowband dipole antenna}\label{Section-Antenna}
Now, let us consider the influence of the applied value of frequency. Maps of $\mathfrak{F}_{\mathrm{Diag}}(\mr)$ at several frequencies are shown in Figure \ref{Result-Frequencies-Good}. Notice that when $k$ is small, only a few artifacts appear; however, since the values of $J_s$, $s=1,2,\cdots,$ are not small, peaks of large magnitude will be contained within the artifacts. By contrast, when we apply high frequency, the shape/location of $\Sigma$ appears very clearly and accurately, but more artifacts with small magnitudes will also be included in the map of $\mathfrak{F}_{\mathrm{Diag}}(\mr)$.

Generally, it is very hard to produce broadband antenna. In this simulation, we used narrowband dipole antennas that are suitable in the frequency range $\mathcal{R}=\set{f^\star:S_{\mathrm{inc}}(1,1)\leq-10\mathrm{dB}}=\set{f^\star:0.75\leq f^\star\leq1.32\mathrm{ GHz}}$ (refer to Figure \ref{DipoleAntenna}). Figure \ref{Result-Frequencies-Bad} shows maps of $\mathfrak{F}_{\mathrm{Diag}}(\mr)$ at $f=0.5$ and $2.4$ $\mathrm{GHz}$. Notice that, at $f=0.5$ $\mathrm{GHz}$, the value of $\min\set{|f-f^\star|:f^\star\in\mathcal{R}}=0.25$ can be regarded as small; it is possible to recognize the existence of an anomaly, but the identified location, $\mr_\star$, is shifted, and a point whose magnitude is not so small also appears. Thus, it is very hard to obtain information about an anomaly. Notice that, at $f=2.4$ $\mathrm{GHz}$, the value of $\min\set{|f-f^\star|:f^\star\in\mathcal{R}}=1.08$ is not small, and dipole antennas are not useful for imaging, making the obtained result very poor.

\begin{figure}[h]
\begin{center}
\includegraphics[width=0.495\textwidth]{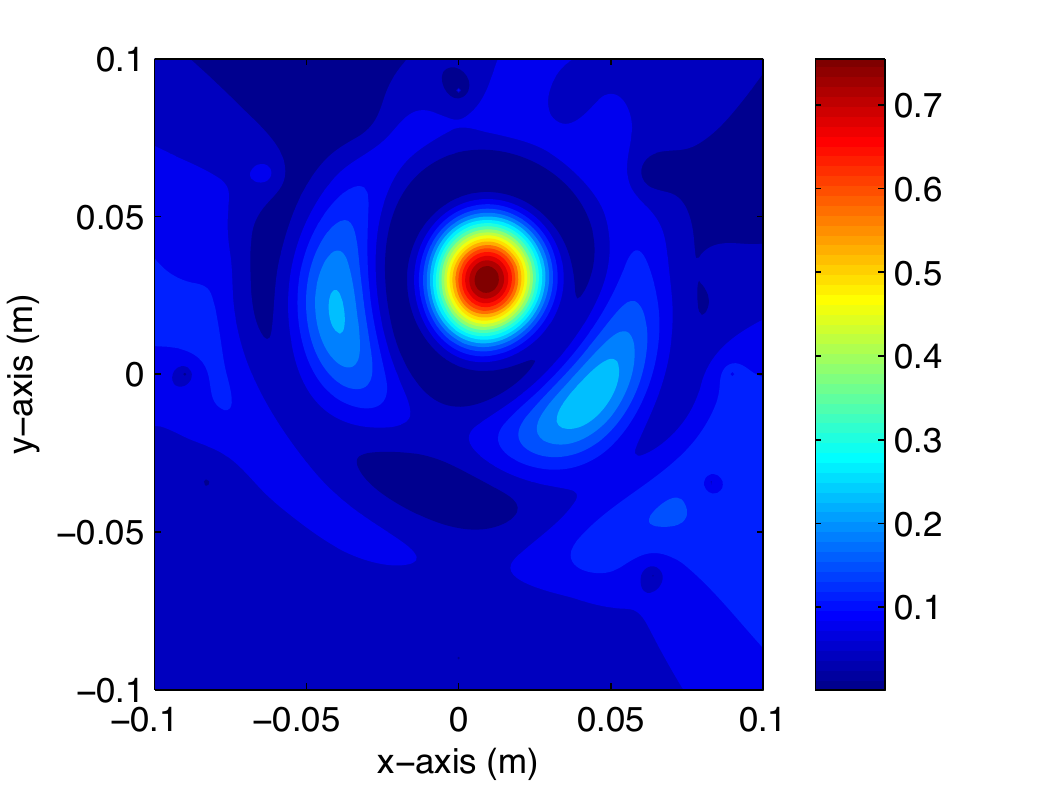}
\includegraphics[width=0.495\textwidth]{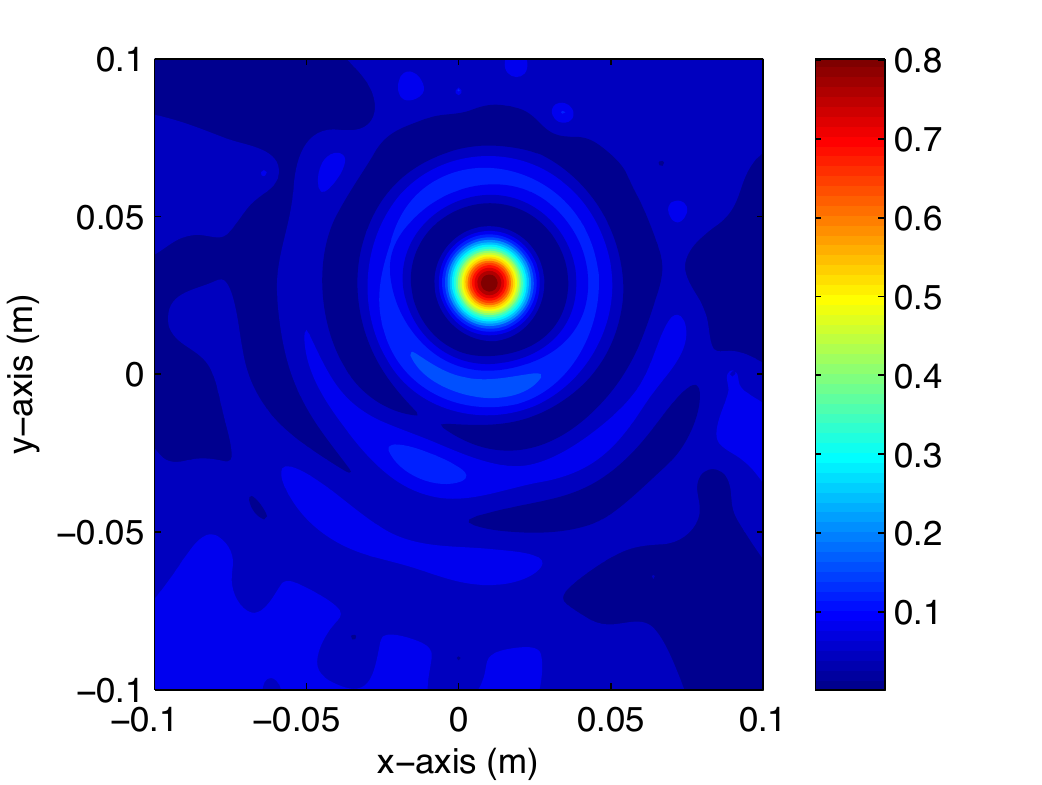}
\caption{\label{Result-Frequencies-Good}(Section \ref{Section-Antenna}) Maps of $\mathfrak{F}_{\mathrm{Diag}}(\mr)$ at $f=0.8$ (left) and $f=1.2$ (right) $\mathrm{GHz}$.}
\end{center}
\end{figure}

\begin{figure}[h]
\begin{center}
\includegraphics[width=0.9\textwidth]{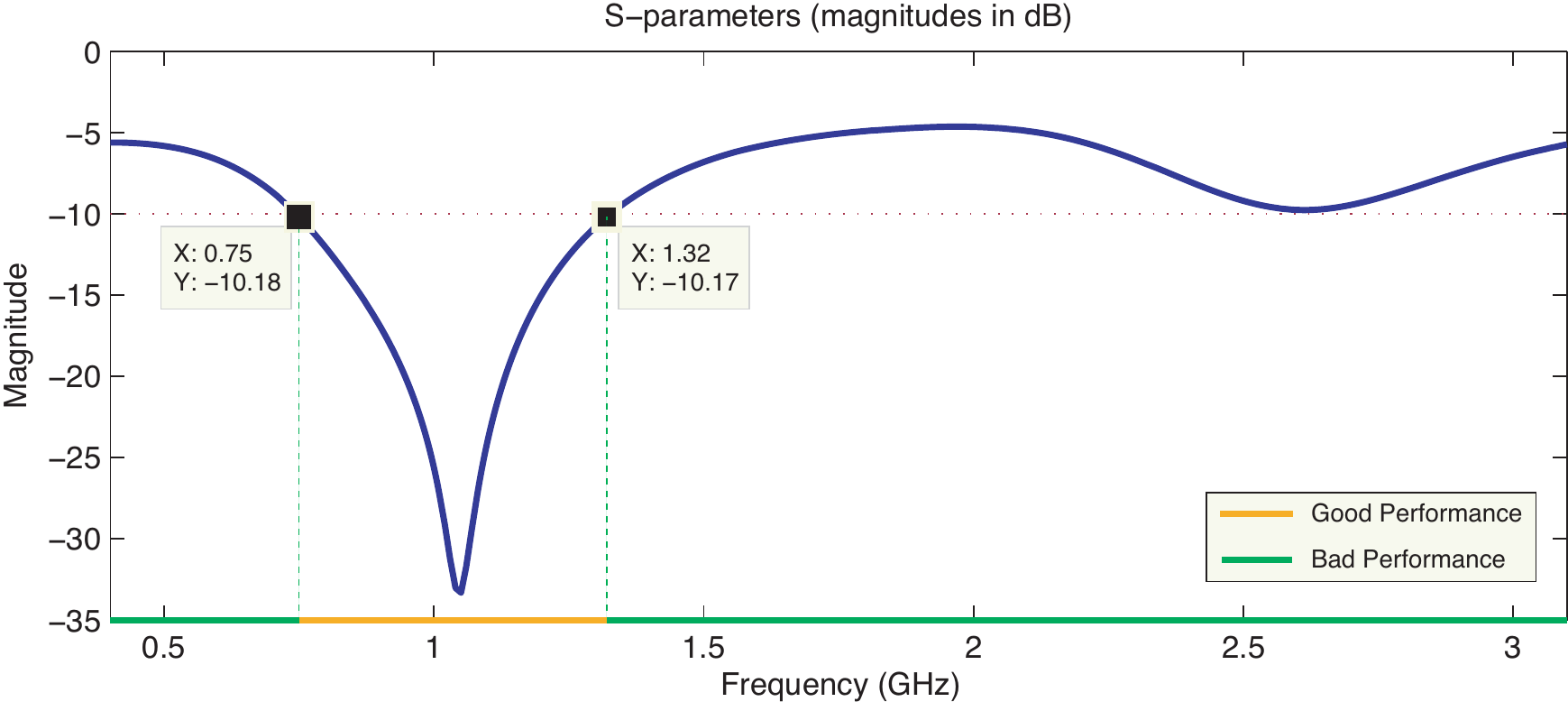}
\caption{\label{DipoleAntenna}(Section \ref{Section-Antenna}) Plot of magnitudes in dB of $S_{\mathrm{inc}}(1,1)$ from $f=0.4$ to $3.1$ $\mathrm{GHz}$.}
\end{center}
\end{figure}

\begin{figure}[h]
\begin{center}
\includegraphics[width=0.495\textwidth]{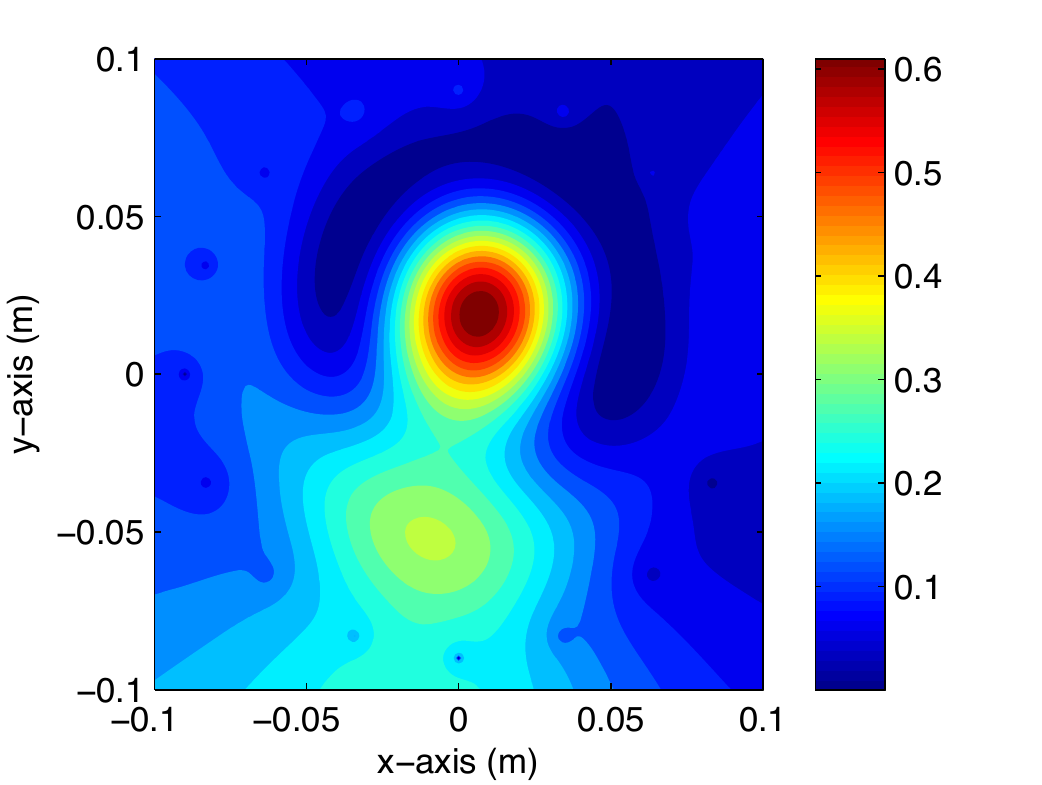}
\includegraphics[width=0.495\textwidth]{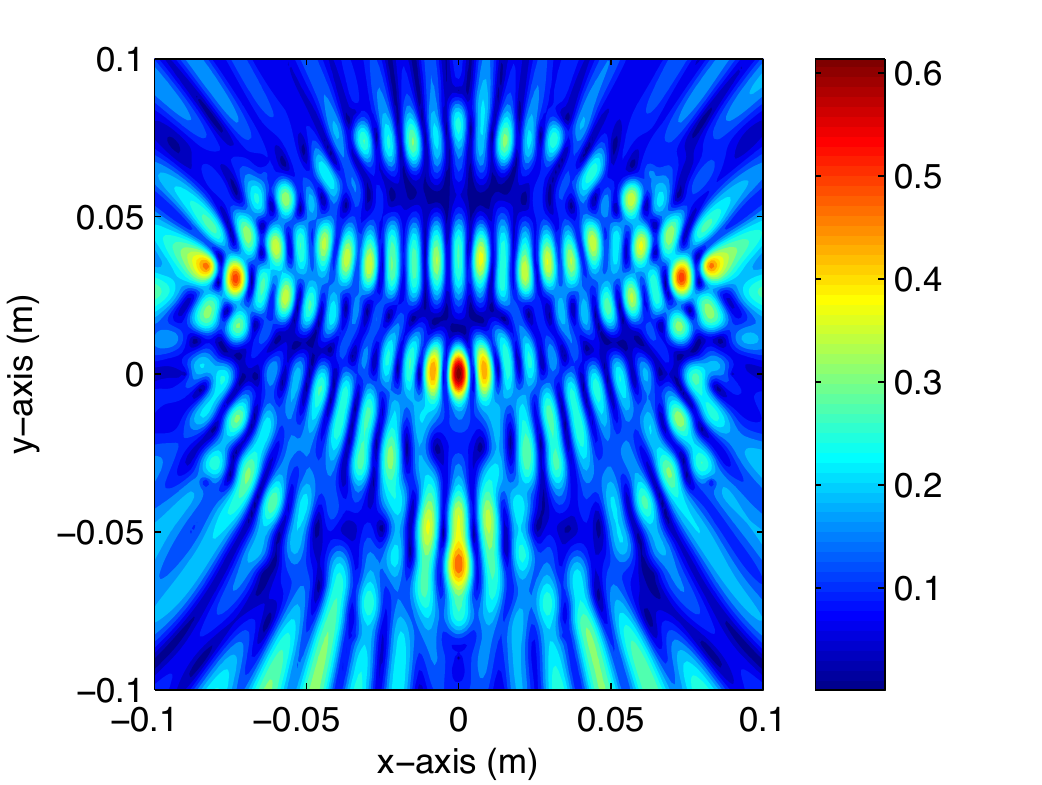}
\caption{\label{Result-Frequencies-Bad}(Section \ref{Section-Antenna}) Maps of $\mathfrak{F}_{\mathrm{Diag}}(\mr)$ at $f=0.5$ (left) and $f=2.4$ (right) $\mathrm{GHz}$.}
\end{center}
\end{figure}

\subsection{Imaging of an extended anomaly}\label{Section-Extended}
Now let us apply the technique developed for imaging an extended anomaly. Test configurations with and without anomalies are shown in Figure \ref{Configuration-Extended}. Figure \ref{Distribution-Extended} shows the distribution of singular values with and without diagonal elements of $\mathbb{F}$. Notice that, unlike imaging of a small anomaly, it is very hard to discriminate nonzero singular values of $\mathbb{F}$ with diagonal elements. Fortunately, it is easy to do so when we eliminate the diagonal elements of $\mathbb{F}$.

On the basis of the maps of $\mathfrak{F}_{\mathrm{Full}}(\mr)$ and $\mathfrak{F}_{\mathrm{Diag}}(\mr)$ in Figure \ref{Result-Extended}, we can see that, although it is impossible to obtain the exact shape of an anomaly, an outline of this shape can be recognized via the map of $\mathfrak{F}_{\mathrm{Full}}(\mr)$. It is interesting to observe that the result obtained via the map $\mathfrak{F}_{\mathrm{Diag}}(\mr)$ is much better than that of the $\mathfrak{F}_{\mathrm{Full}}(\mr)$. We also refer to Figure \ref{Contourplotextended} to compare the artifacts. Hence, we can conclude that proposed method is an improvement over the traditional approach and that by taking an outline as an initial guess, the complete shape of an anomaly can be retrieved by applying a Newton-type method, level-set technique, or other quantitative inversion
strategies, refer to \cite{KSY,DL,S1,PL3,ADIM,BK1,CW,PBCID}.

\begin{figure}[h]
\begin{center}
\includegraphics[width=0.495\textwidth]{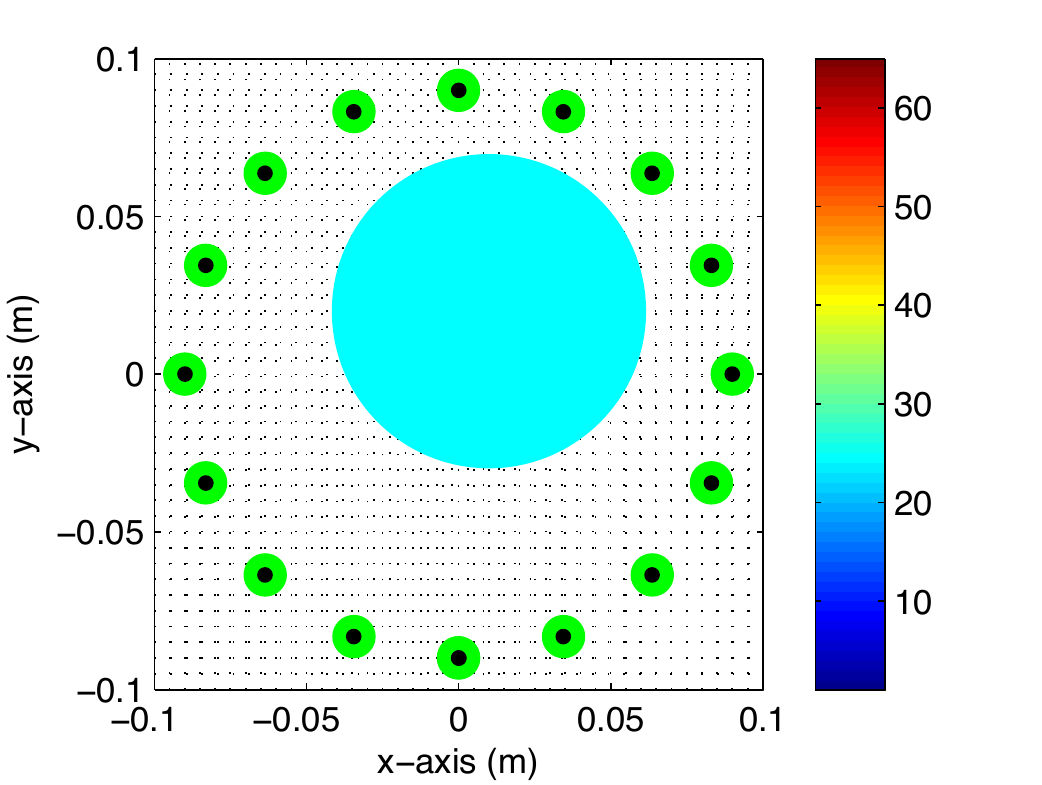}
\includegraphics[width=0.495\textwidth]{Configuration-homo.pdf}
\caption{\label{Configuration-Extended}(Section \ref{Section-Extended}) Test configurations with (left) and without (right) anomalies.}
\end{center}
\end{figure}

\begin{figure}[h]
\begin{center}
\includegraphics[width=0.495\textwidth]{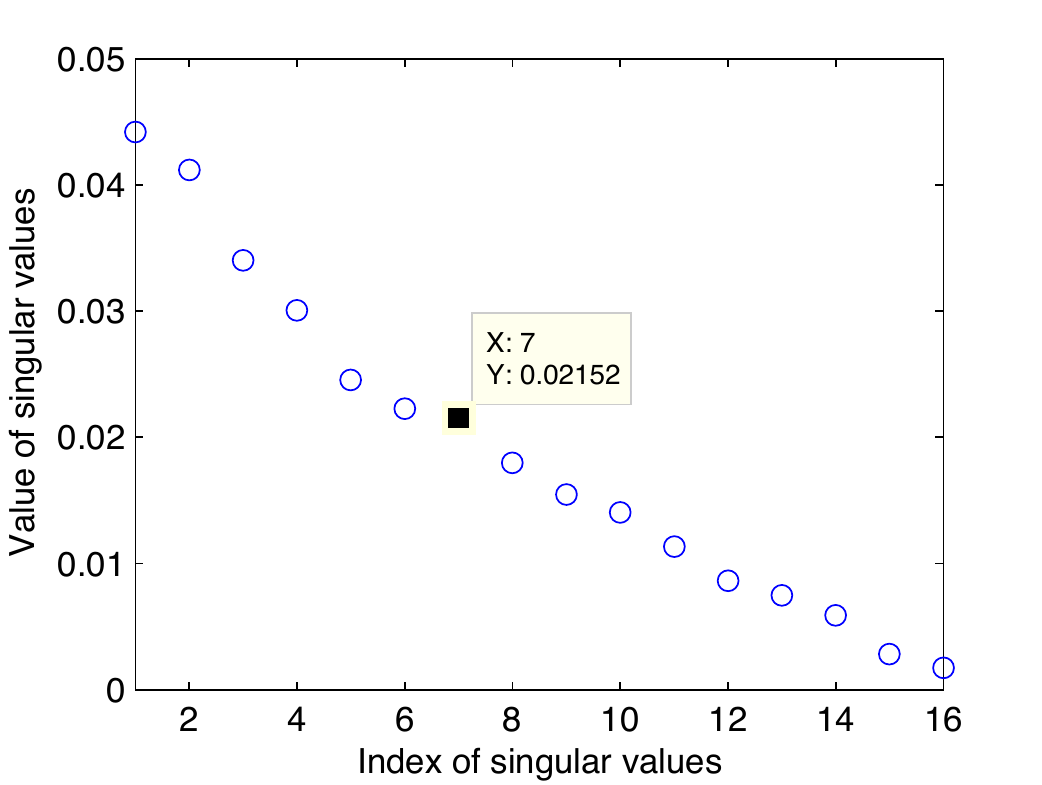}
\includegraphics[width=0.495\textwidth]{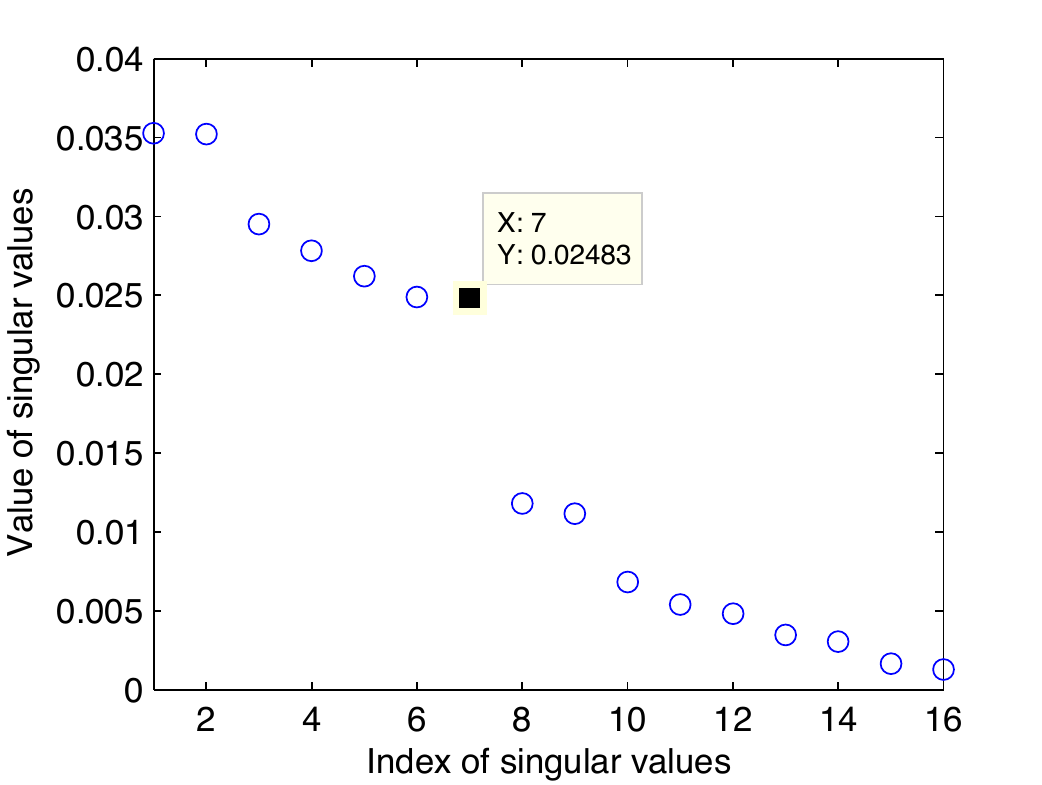}
\caption{\label{Distribution-Extended}(Section \ref{Section-Extended}) Distribution of singular values with (left) and without (right) diagonal elements of $\mathbb{F}$ at $f=1\mathrm{GHz}$.}
\end{center}
\end{figure}

\begin{figure}[h]
\begin{center}
\includegraphics[width=0.495\textwidth]{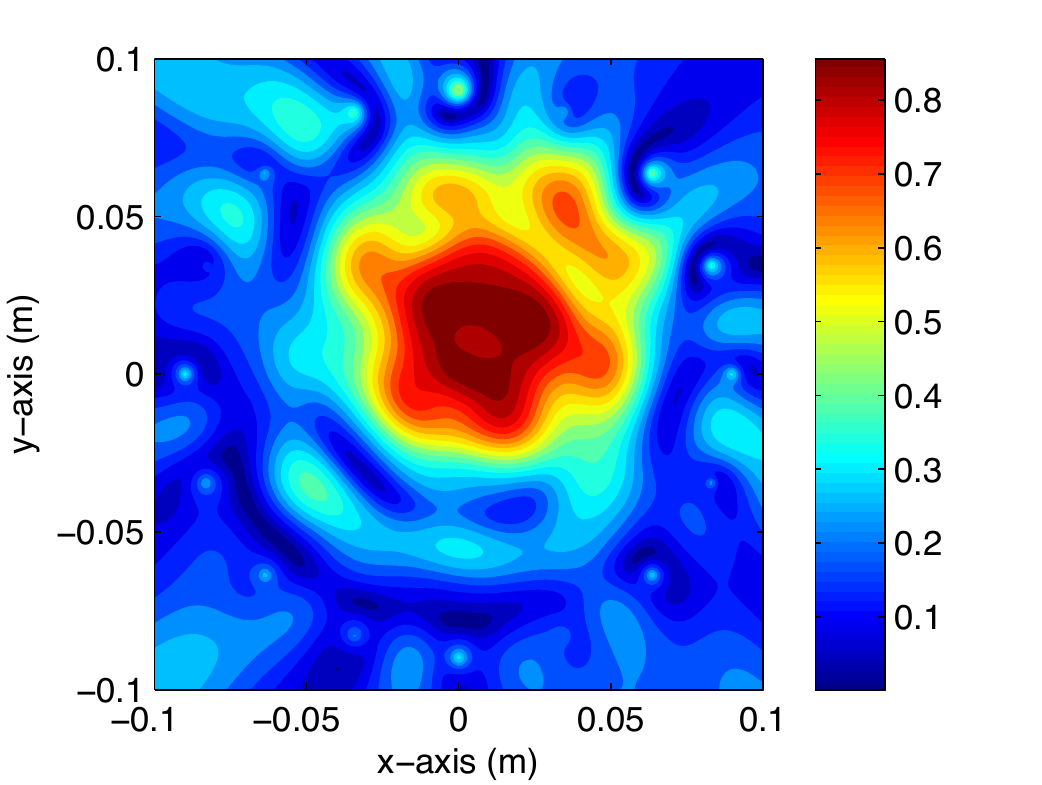}
\includegraphics[width=0.495\textwidth]{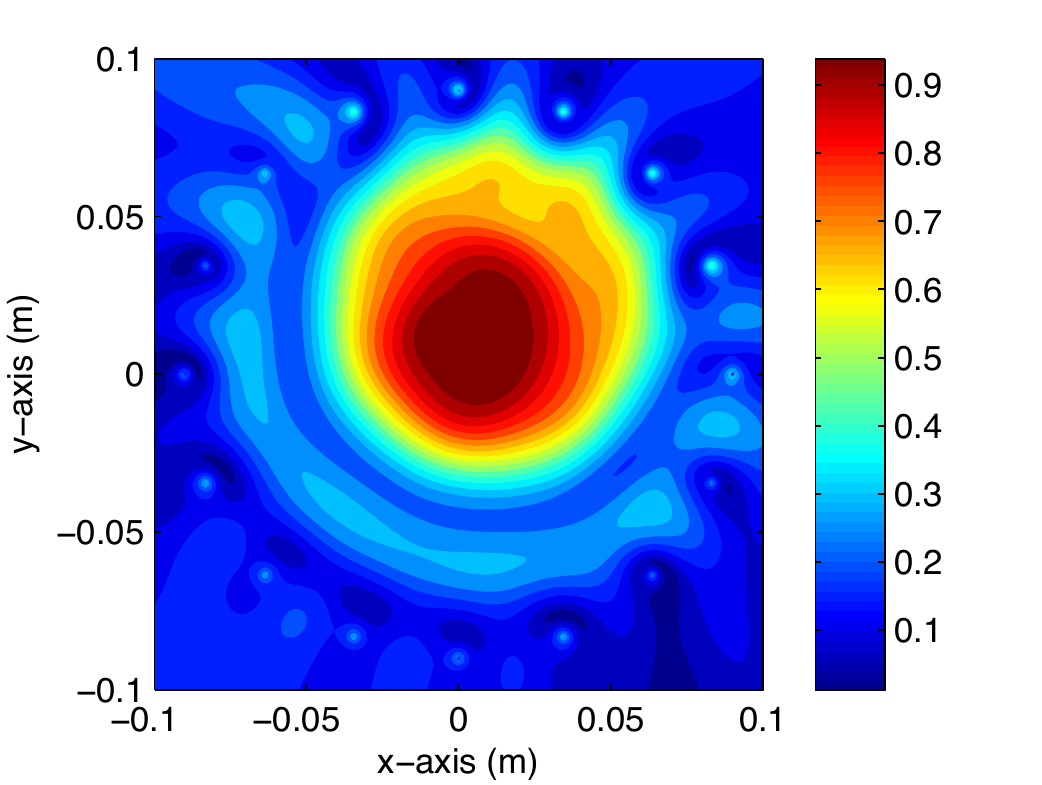}
\caption{\label{Result-Extended}(Section \ref{Section-Extended}) Maps of $\mathfrak{F}_{\mathrm{Full}}(\mr)$ (left column) and $\mathfrak{F}_{\mathrm{Diag}}(\mr)$ (right column) at $f=1\mathrm{GHz}$.}
\end{center}
\end{figure}

\begin{figure}[h]
\begin{center}
\includegraphics[width=0.495\textwidth]{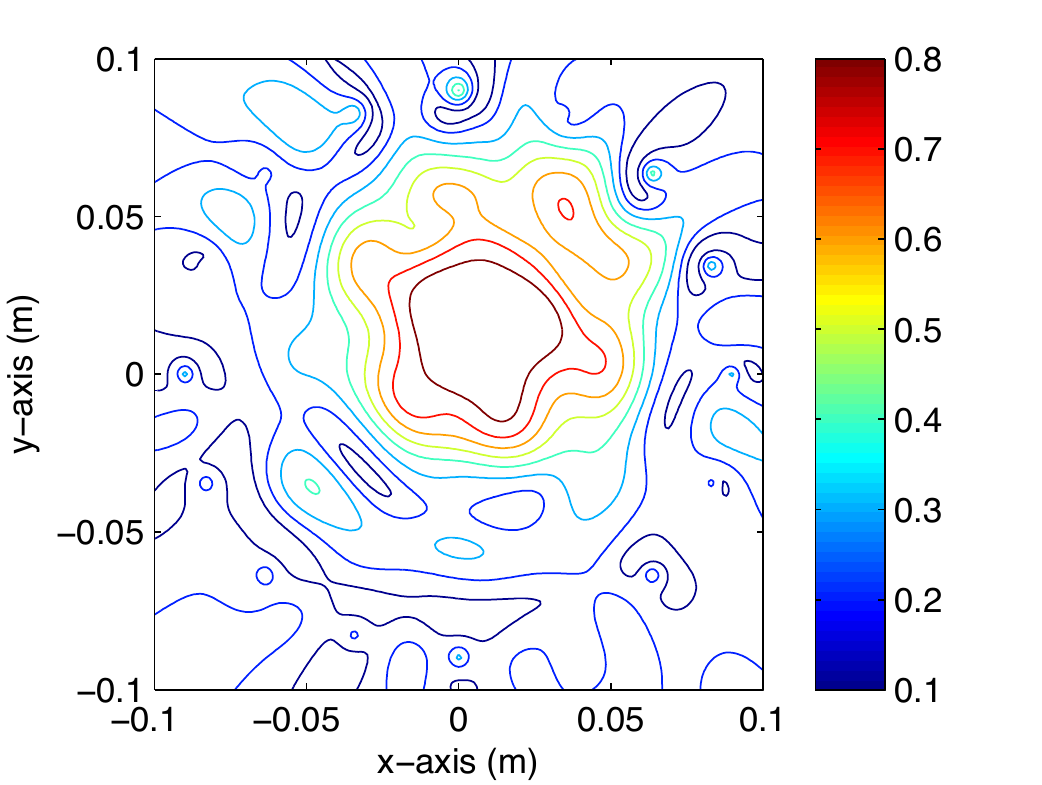}
\includegraphics[width=0.495\textwidth]{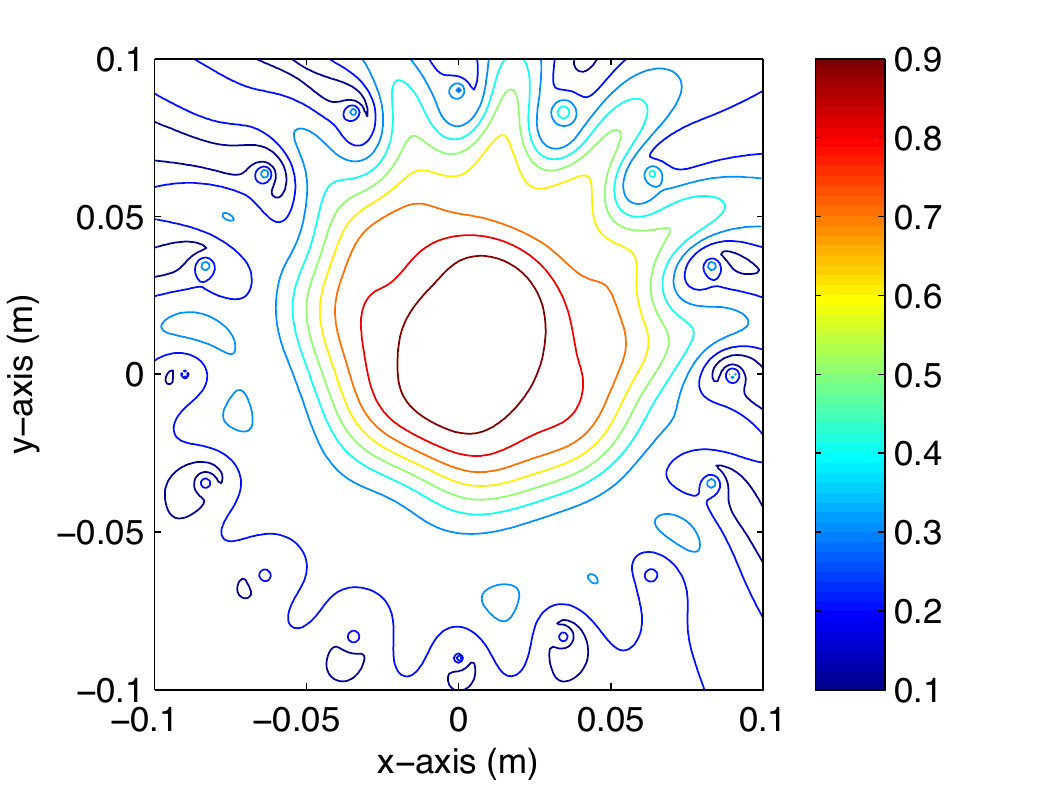}
\caption{\label{Contourplotextended}(Section \ref{Section-Extended}) Contour lines of $\mathfrak{F}_{\mathrm{Full}}(\mr)$ (left column) and $\mathfrak{F}_{\mathrm{Diag}}(\mr)$ (right column) at $f=1\mathrm{GHz}$.}
\end{center}
\end{figure}

\subsection{Imaging of small anomalies: real-data experiment}\label{Section-Real}
Here, we perform real experiments to demonstrate the viability of the designed algorithm. The microwave machine manufactured by the research team of the Radio Environment \& Monitoring research group of the Electronics and Telecommunications Research Institute (ETRI) is illustrated in Figure \ref{RealdataSimulation}. For the simulation, we filled this machine with water with permittivity $78$ and conductivity $0.2$S/m, and search domain is selected as a circle of radius $0.085$m. The cross-section of one screwdriver to illustrate a single anomaly and two screw drivers and a hand hammer to simulate multiple anomalies (Figure \ref{Configuration-Real}) are considered.

\begin{figure}[h]
\begin{center}
\includegraphics[width=0.495\textwidth]{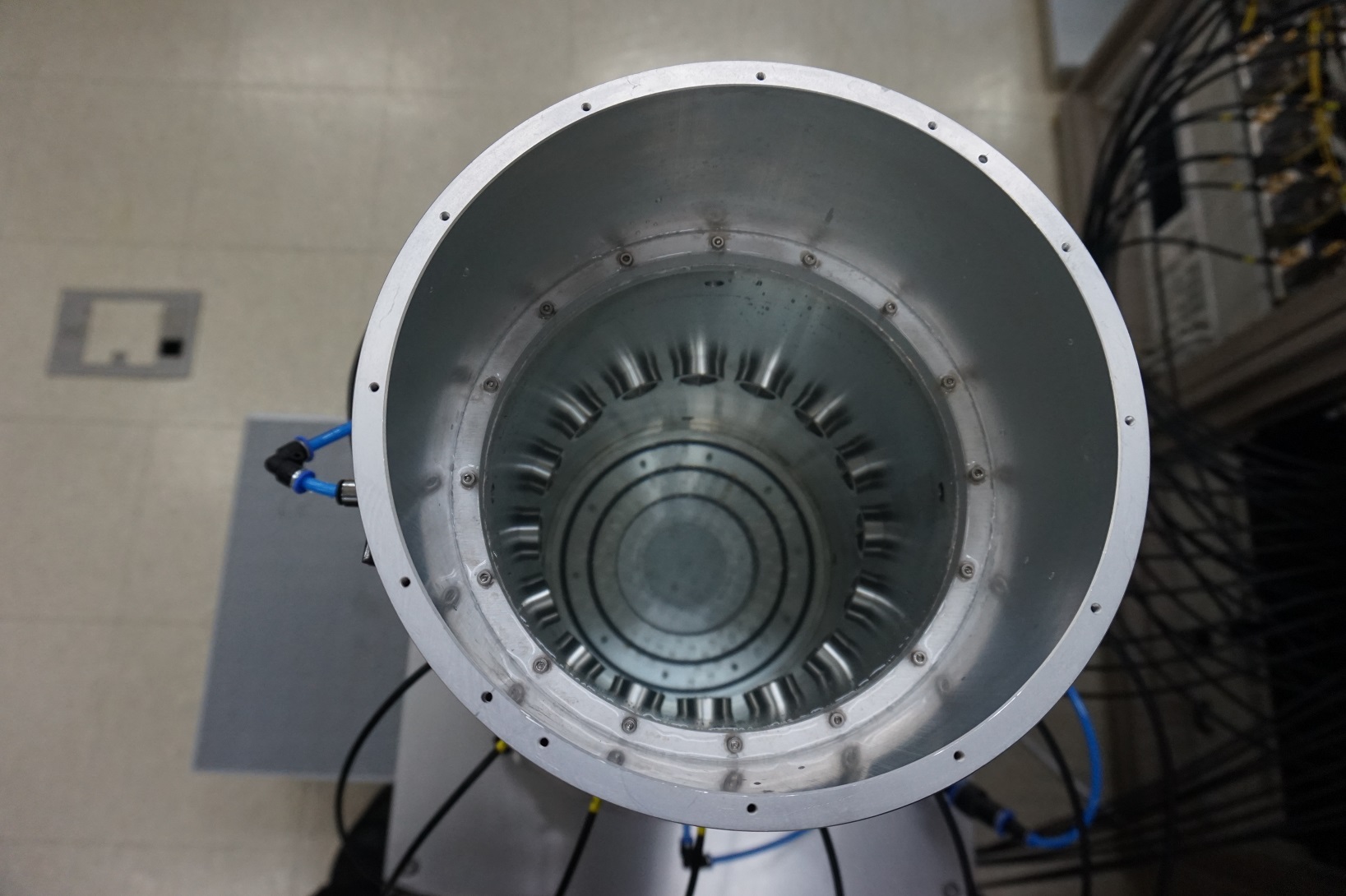}
\includegraphics[width=0.495\textwidth]{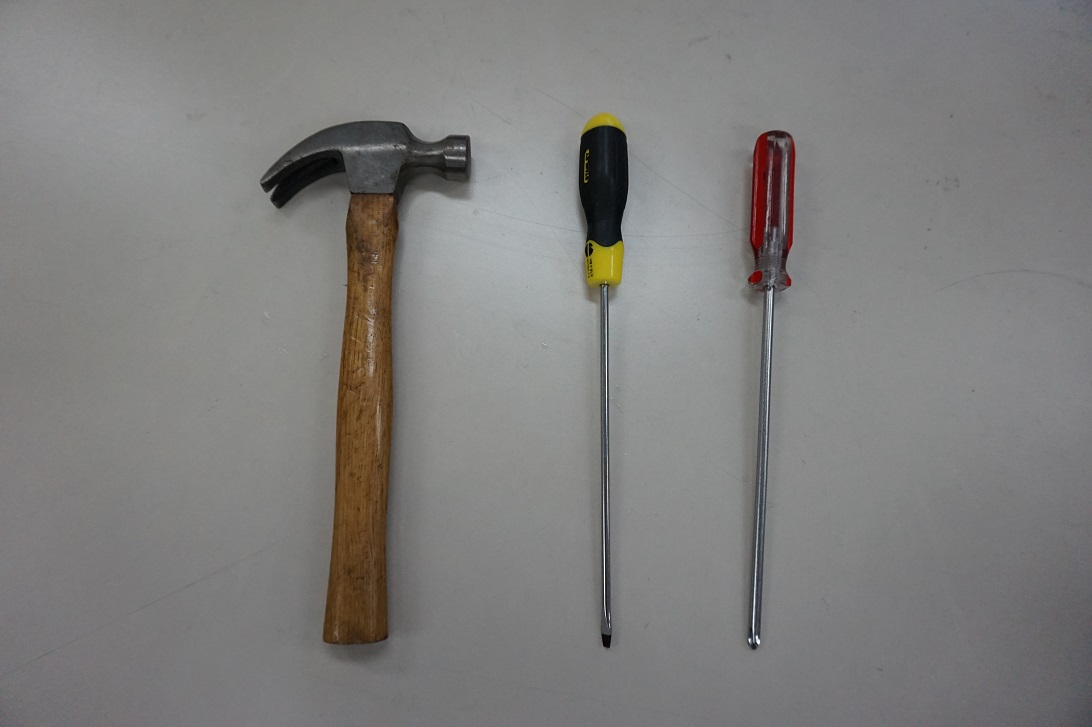}
\caption{\label{RealdataSimulation}(Section \ref{Section-Real}) Real microwave machine data (left) and materials used for anomalies (right).}
\end{center}
\end{figure}

\begin{figure}[h]
\begin{center}
\includegraphics[width=0.495\textwidth]{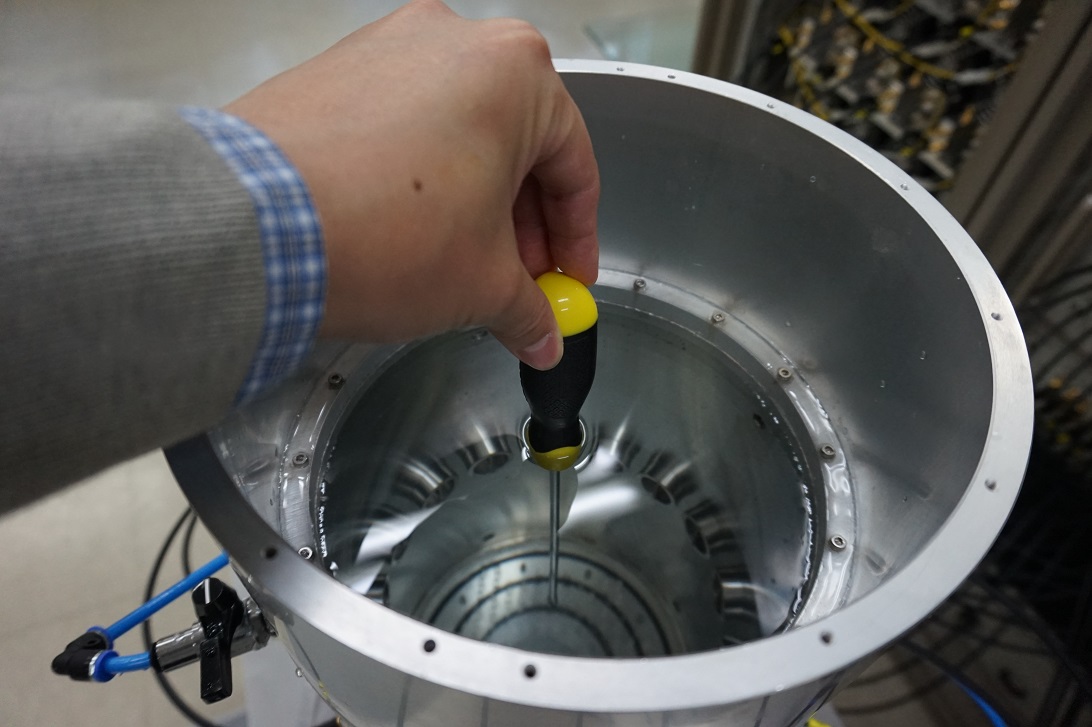}
\includegraphics[width=0.495\textwidth]{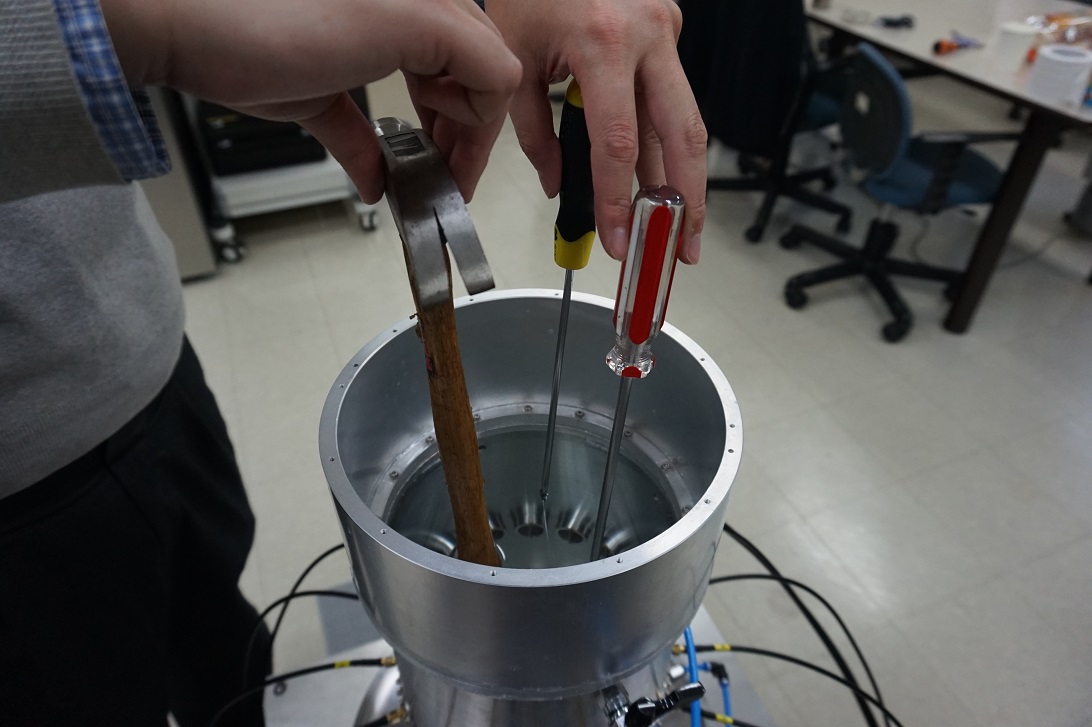}
\caption{\label{Configuration-Real}(Section \ref{Section-Real}) Description of real-data simulation for single (left) and multiple (right) anomalies.}
\end{center}
\end{figure}

Figure \ref{Result-Real} shows the description of simulations and map of $\mathfrak{F}_{\mathrm{Diag}}(\mr)$ at $f=925$ MHz for imaging single and multiple anomalies. Just as with the previous result in Figure \ref{Result}, the shape of the screwdriver is retrieved successfully. For multiple anomalies, although some artifacts appear, we can recognize the outline shape of three anomalies. Furthermore, it is possible to observe that one anomaly is larger than the others. This is due to the fact that the size of the hand hammer is larger than that of the screw drivers. However, unlike imaging of single anomalies, a careful threshold to discriminate nonzero singular values is necessary. In this result, we adopt $0.02-$threshold scheme (keeping only the values $\tau_n$ such that $\tau_n\geq0.02$).

\begin{figure}[h]
\begin{center}
\includegraphics[width=0.495\textwidth]{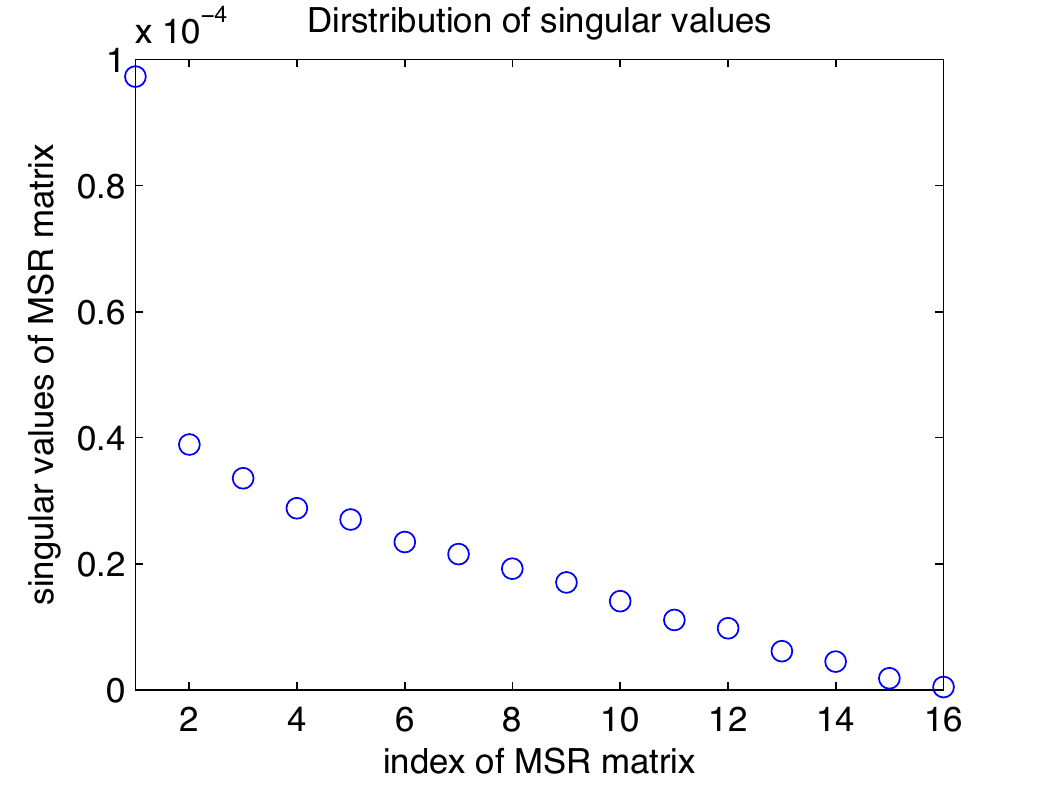}
\includegraphics[width=0.495\textwidth]{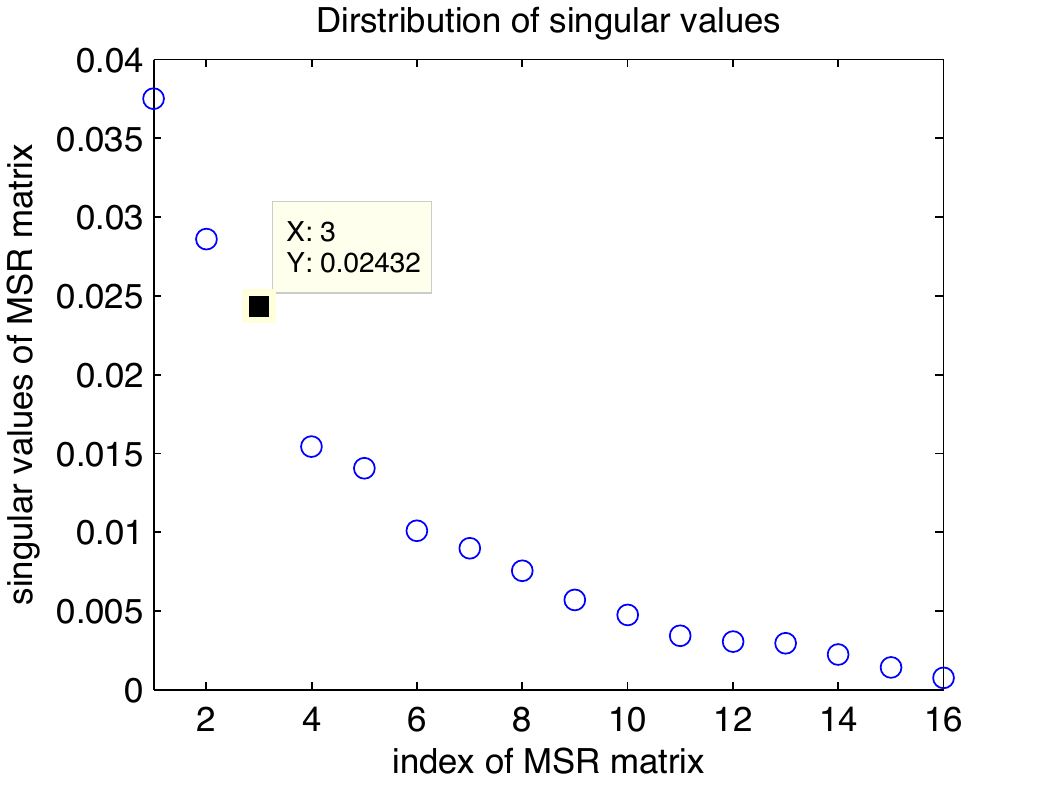}\\
\includegraphics[width=0.495\textwidth]{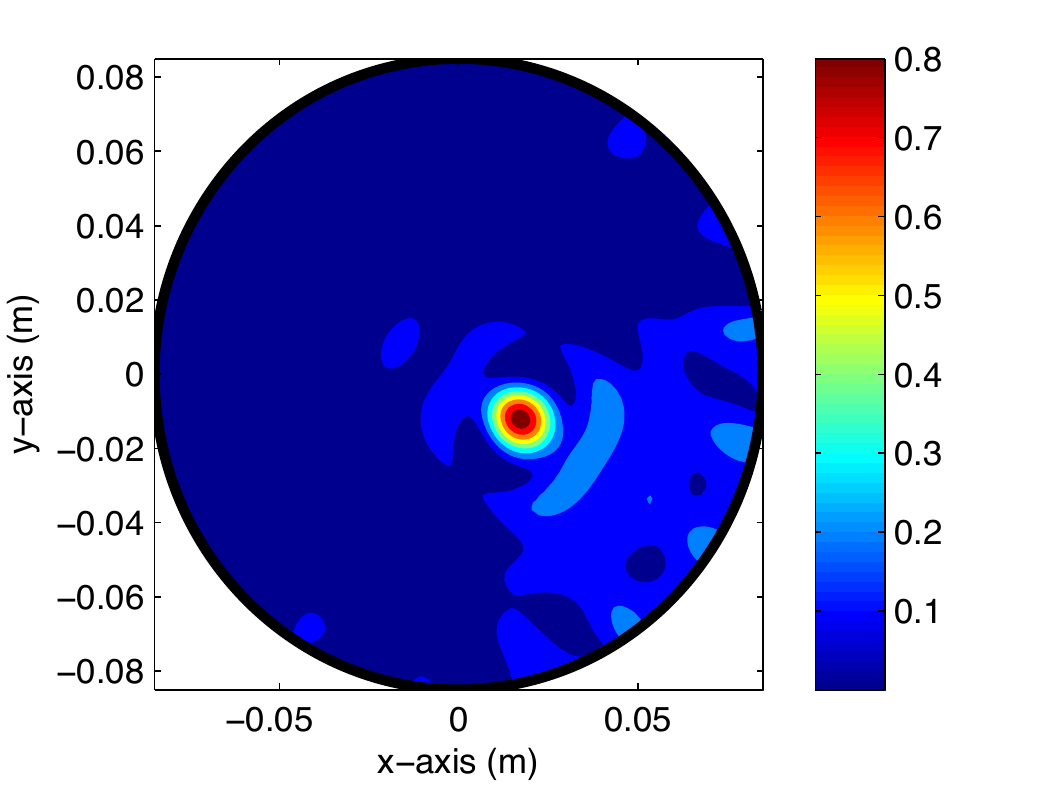}
\includegraphics[width=0.495\textwidth]{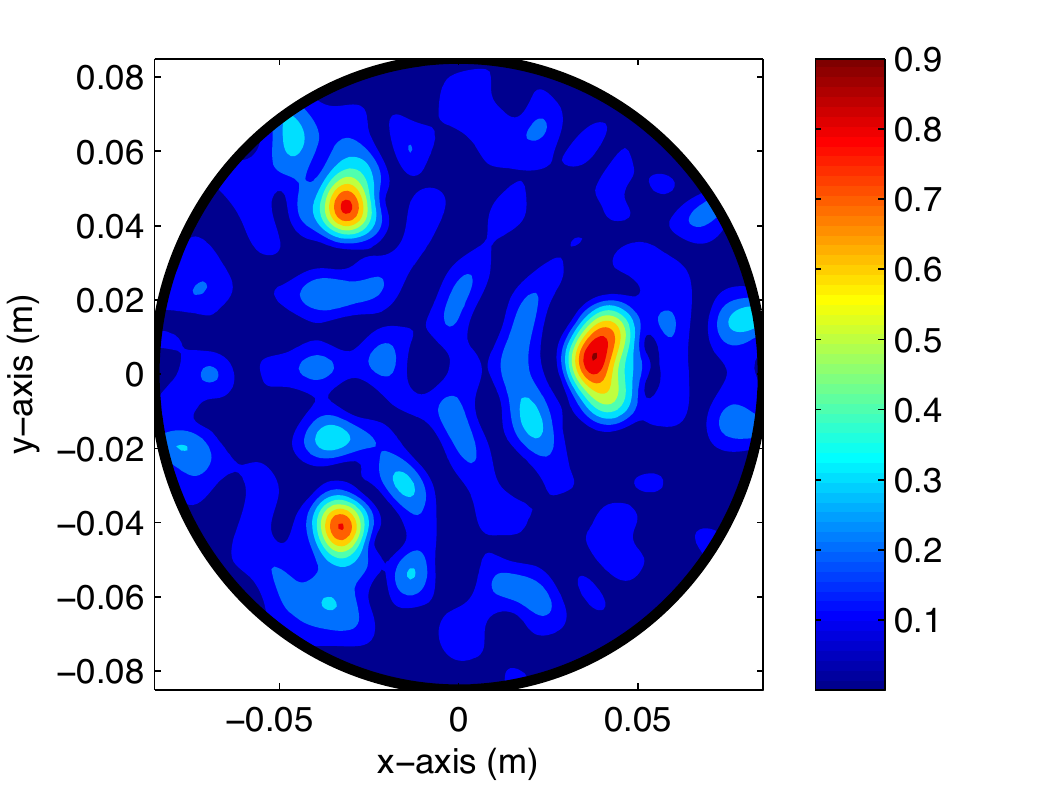}
\caption{\label{Result-Real}(Section \ref{Section-Real}) Top line: description of real-data simulations for single (left) and multiple (right) anomalies. Bottom line: maps of $\mathfrak{F}_{\mathrm{Diag}}(\mr)$ at $f=925~\mathrm{MHz}$ for single (left) and multiple (right) anomalies.}
\end{center}
\end{figure}

\subsection{Further results: limitations of the real-data experiment}\label{Section-Real-Limit}
Here, we discuss the limitations of the imaging technique. For illustration, we consider the cross-section imaging of one screwdriver and one plastic straw for multiple anomalies and an air-filled plastic bottle as an extended anomaly (refer to Figure \ref{Configuration-Real-Limit}).

The imaging result for multiple small anomalies of $\mathfrak{F}_{\mathrm{Diag}}(\mr)$ at $f=925$ MHz is shown in Figure \ref{Result-Real-Limit}. Notice that the values of permittivity of the plastic straw and screwdriver are extremely small and large, respectively; only one singular value that is significantly larger than the others appears. Correspondingly, it is possible to recognize the location of screwdriver in the map of $\mathfrak{F}_{\mathrm{Diag}}(\mr)$, but it is very hard to identify the location of plastic straw.

We next consider the imaging of an extended anomaly. To obtain the map of $\mathfrak{F}_{\mathrm{Diag}}(\mr)$ at $f=925$ MHz, the first eight singular values were used to define the imaging function; however, in general, we have observed that it is very hard to discriminate the nonzero singular value. Since the size of the anomaly is large, the Born approximation cannot be applied so that the simulation results do not match the theoretical results in Theorem \ref{TheoremStructure}. Although it was impossible to identify the complete shape of the extended anomaly, its outline shape was recognized. This is similar result as in \cite{BPV,HSZ1} about the imaging of an extended target. Hence, this result can be regarded as a good initial guess of Newton-type schemes or level-set strategies.

\begin{figure}[h]
\begin{center}
\includegraphics[width=0.495\textwidth]{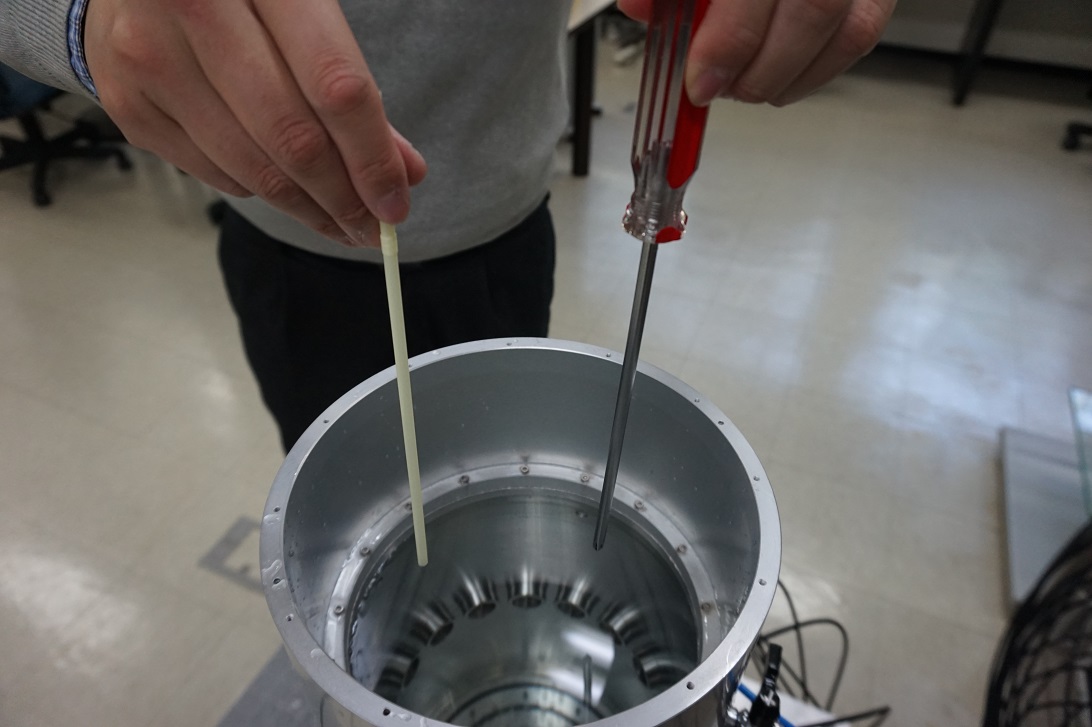}
\includegraphics[width=0.495\textwidth]{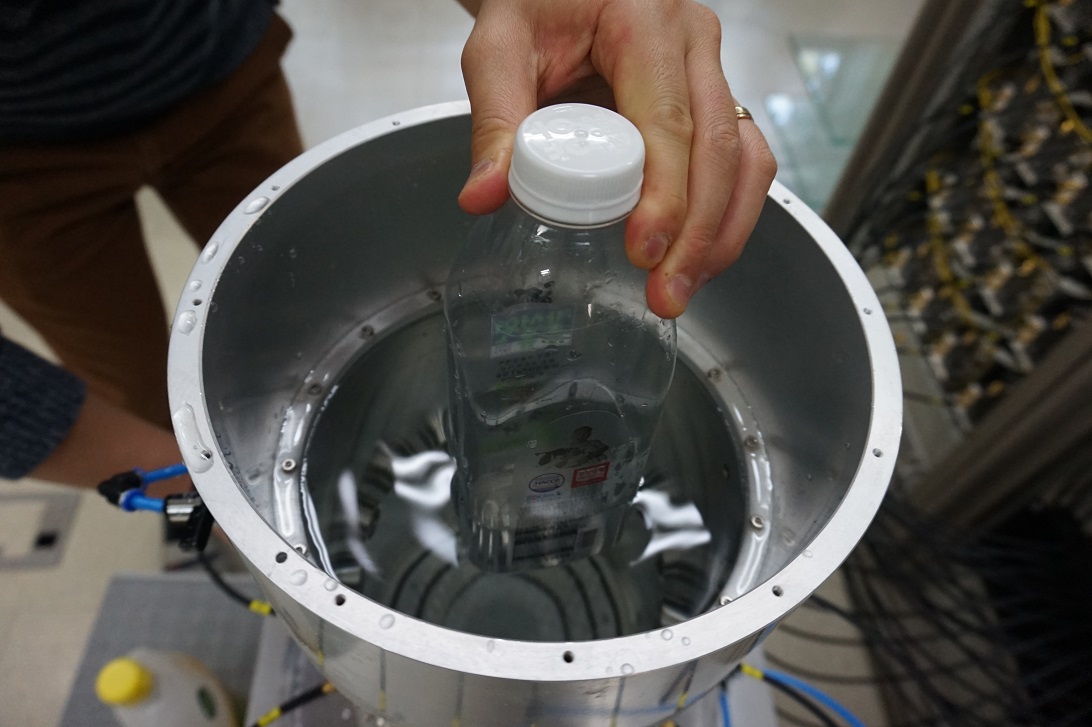}
\caption{\label{Configuration-Real-Limit}(Section \ref{Section-Real-Limit}) Description of real-data simulation for multiple small anomalies (left) and a rectangular extended anomaly (right).}
\end{center}
\end{figure}

\begin{figure}[h]
\begin{center}
\includegraphics[width=0.495\textwidth]{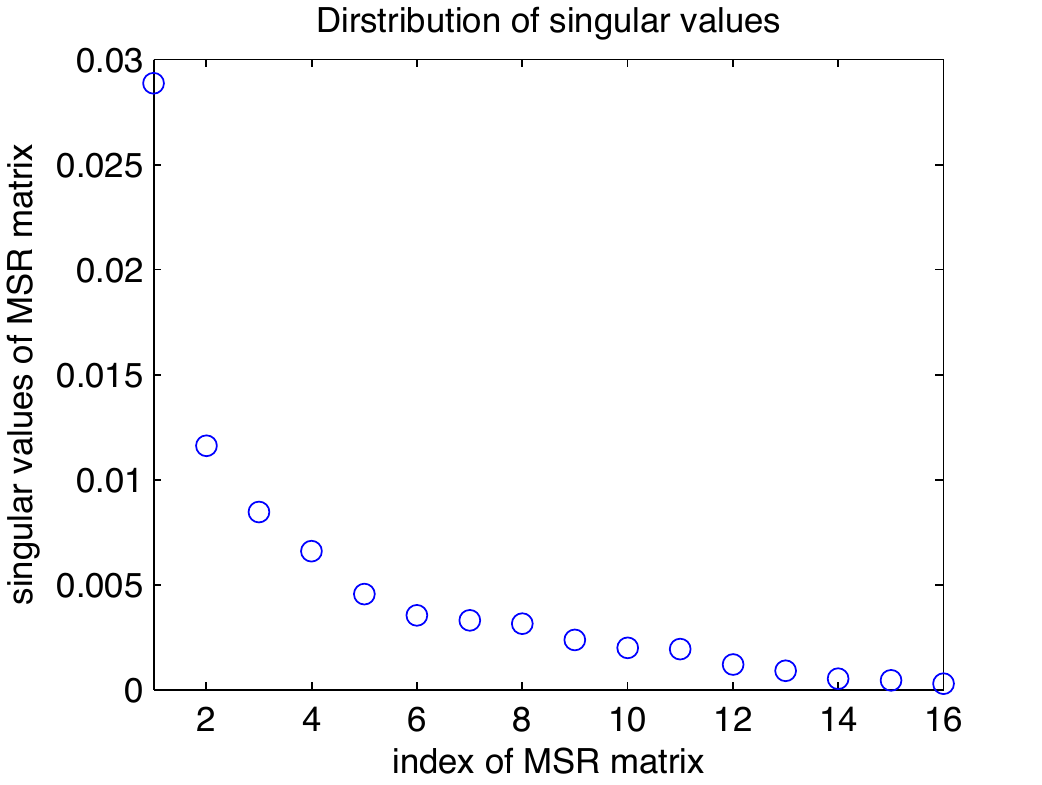}
\includegraphics[width=0.495\textwidth]{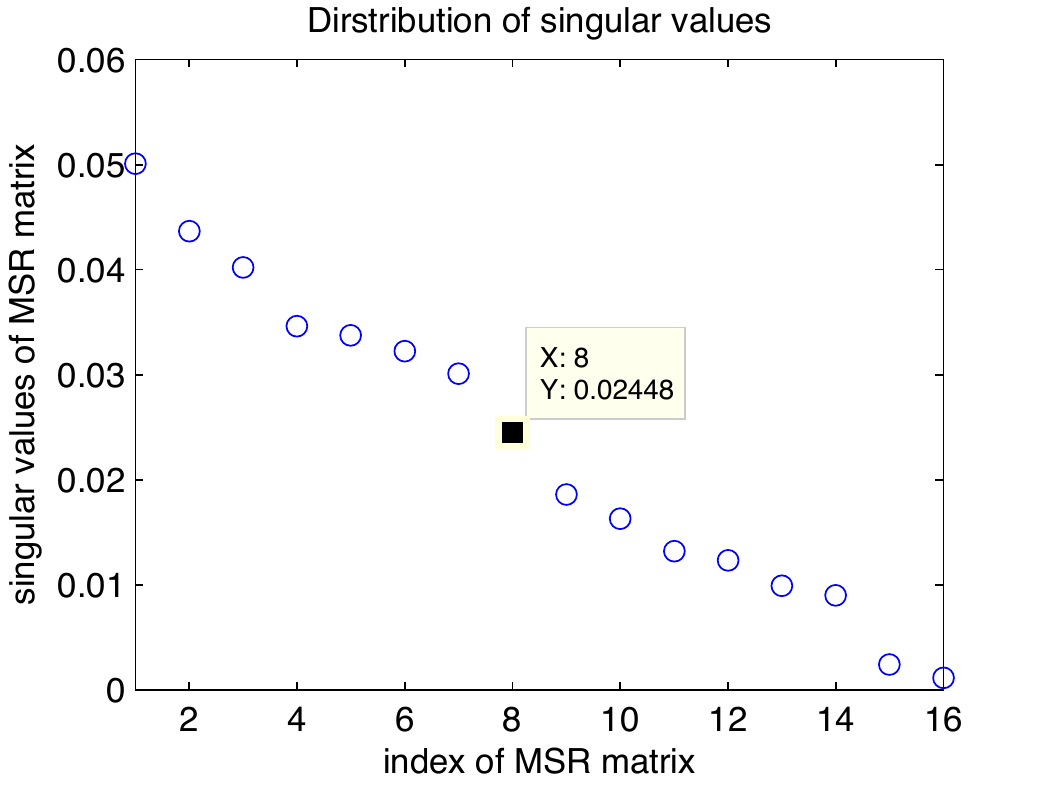}\\
\includegraphics[width=0.495\textwidth]{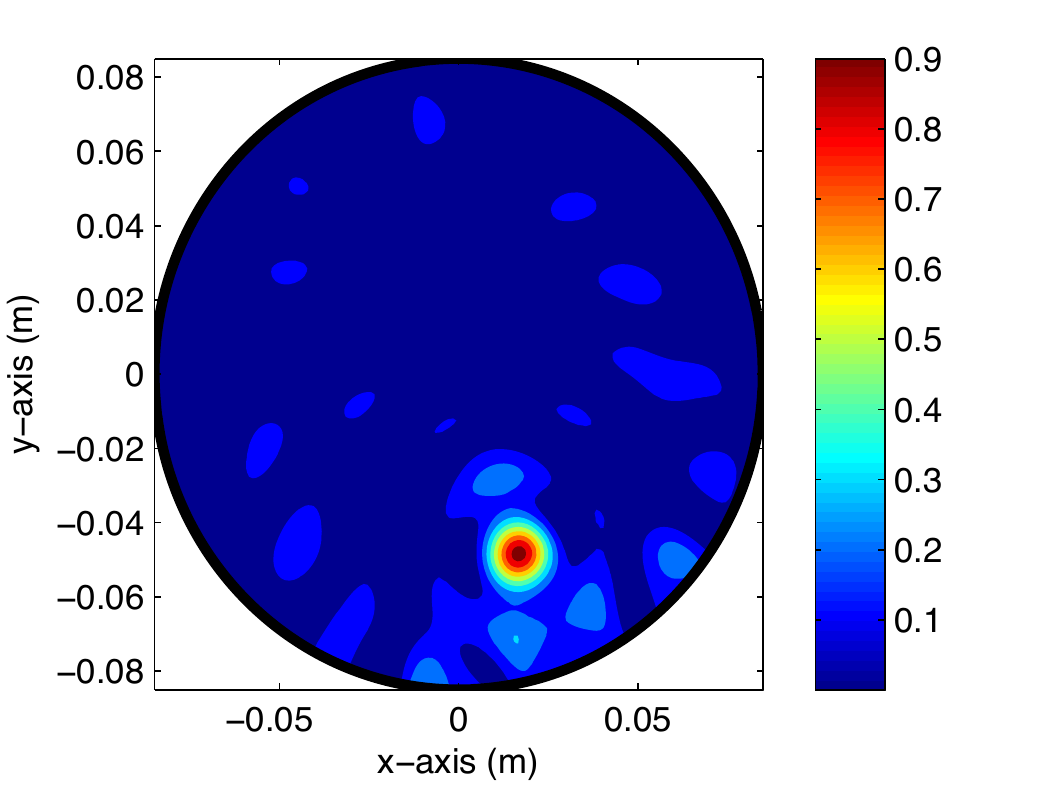}
\includegraphics[width=0.495\textwidth]{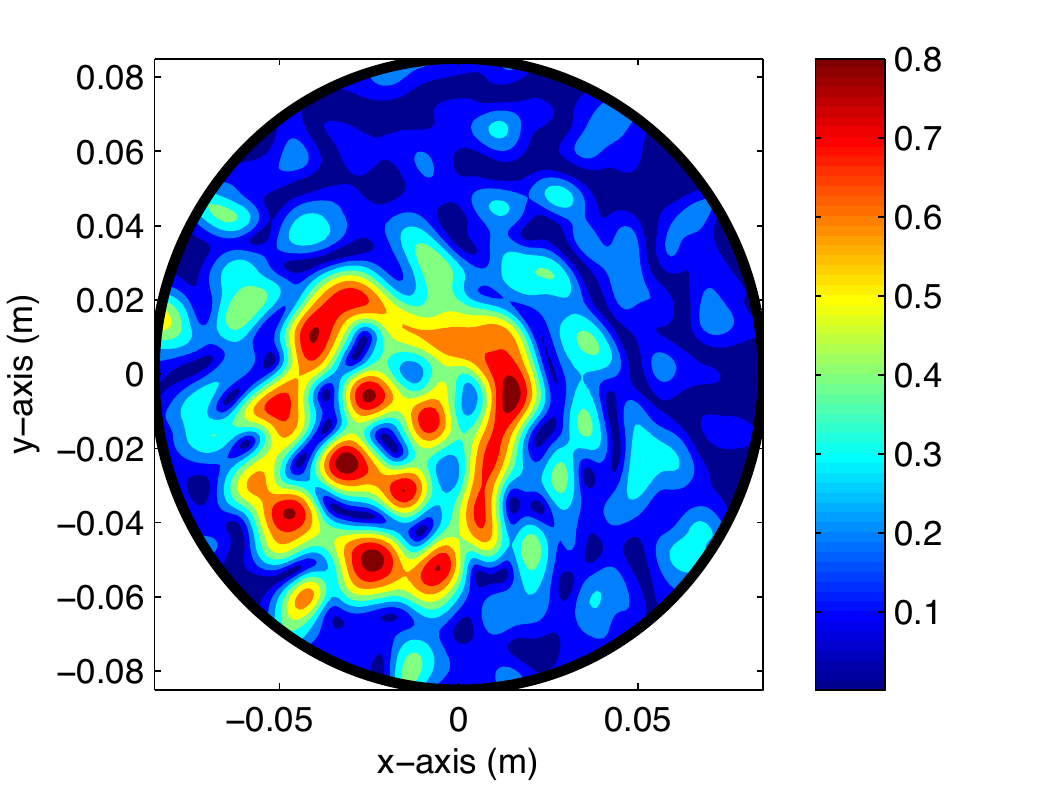}
\caption{\label{Result-Real-Limit}(Section \ref{Section-Real-Limit}) Top line: description of real-data simulations for multiple small anomalies (left) and a rectangular extended anomaly (right). Bottom line: maps of $\mathfrak{F}_{\mathrm{Diag}}(\mr)$ at $f=925~\mathrm{MHz}$ for small anomalies (left) and a rectangular anomaly (right).}
\end{center}
\end{figure}

\section{Conclusion}\label{sec:5}
In this study, we suggested an microwave imaging technique without diagonal elements in the scattering matrix to reflect the effects of anomalies alone. To show the feasibility of the imaging technique, we explored traditional and suggested imaging functions by finding a relationship with infinite series of Bessel function of integer order. Based
 on explored structures, we confirmed that the shapes of small anomalies can be retrieved by imaging function without diagonal elements of the scattering matrix, and this is a considerable improvement over the traditional technique.

Following several works \cite{AGKPS,P-SUB3,P-SUB5,PP2}, it has been shown that subspace migration is effective in a limited-aperture inverse scattering problem. The application and development of a microwave imaging technique to the limited-aperture problem is an interesting problem. In this paper, we considered the imaging of circle-like small and extended targets. Application to the imaging of arbitrarily shaped targets, such as cracks, star-shaped domains, etc., is a subject for future research. Furthermore, there exists some limitations on the real-world applications. We believe that application of multiple frequencies will guarantee better results. Improvement of the designed algorithm for obtaining better results in real-world applications will be the subject of the forthcoming work.

\section*{Acknowledgement}
The author wish to thank professor Jin Keun Seo for his valuable advices and encouragement. The author is also grateful to Kwang-Jae Lee and Seong-Ho Son at the Radio Technology Research Department, Electronics and Telecommunications Research Institute (ETRI) for helping in generating scattering parameter data from CST STUDIO SUITE and microwave machine. The constructive comments of two anonymous reviewers are acknowledged. This research was supported by the Basic Science Research Program through the National Research Foundation of Korea (NRF) funded by the Ministry of Education (No. NRF-2017R1D1A1A09000547).

\bibliographystyle{elsarticle-num-names}
\bibliography{../../References}

\begin{thebibliography}{59}
\providecommand{\natexlab}[1]{#1}
\providecommand{\url}[1]{\texttt{#1}}
\providecommand{\urlprefix}{URL }
\expandafter\ifx\csname urlstyle\endcsname\relax
  \providecommand{\doi}[1]{doi:\discretionary{}{}{}#1}\else
  \providecommand{\doi}[1]{doi:\discretionary{}{}{}\begingroup
  \urlstyle{rm}\url{#1}\endgroup}\fi
\providecommand{\bibinfo}[2]{#2}

\bibitem[{Haynes et~al.(2014)Haynes, Stang, and Moghaddam}]{HSM2}
\bibinfo{author}{M.~Haynes}, \bibinfo{author}{J.~Stang},
  \bibinfo{author}{M.~Moghaddam}, \bibinfo{title}{Real-time microwave imaging
  of differential temperature for thermal therapy monitoring},
  \bibinfo{journal}{IEEE Trans. Biomed. Eng.}
  \bibinfo{volume}{61}~(\bibinfo{number}{6}) (\bibinfo{year}{2014})
  \bibinfo{pages}{1787--1797}.

\bibitem[{Meaney et~al.(2000)Meaney, Fanning, Li, Poplack, and Paulsen}]{MFLPP}
\bibinfo{author}{P.~M. Meaney}, \bibinfo{author}{M.~W. Fanning},
  \bibinfo{author}{D.~Li}, \bibinfo{author}{S.~P. Poplack},
  \bibinfo{author}{K.~D. Paulsen}, \bibinfo{title}{A clinical prototype for
  active microwave imaging of the breast}, \bibinfo{journal}{IEEE Trans.
  Microwave Theory Tech.} \bibinfo{volume}{48}~(\bibinfo{number}{11})
  (\bibinfo{year}{2000}) \bibinfo{pages}{1841--1853}.

\bibitem[{Shea et~al.(2010)Shea, Kosmas, Veen, and Hagness}]{SKVH}
\bibinfo{author}{J.~D. Shea}, \bibinfo{author}{P.~Kosmas},
  \bibinfo{author}{B.~D.~V. Veen}, \bibinfo{author}{S.~C. Hagness},
  \bibinfo{title}{Contrast-enhanced microwave imaging of breast tumors: a
  computational study using 3-{D} realistic numerical phantoms},
  \bibinfo{journal}{Inverse Problem} \bibinfo{volume}{26}
  (\bibinfo{year}{2010}) \bibinfo{pages}{074009}.

\bibitem[{Mohammed et~al.(2014)Mohammed, Abbosh, Mustafa, and Ireland}]{MAMI}
\bibinfo{author}{B.~J. Mohammed}, \bibinfo{author}{A.~M. Abbosh},
  \bibinfo{author}{S.~Mustafa}, \bibinfo{author}{D.~Ireland},
  \bibinfo{title}{Microwave system for head imaging}, \bibinfo{journal}{IEEE
  Trans. Instrum. Meas.} \bibinfo{volume}{63} (\bibinfo{year}{2014})
  \bibinfo{pages}{117--123}.

\bibitem[{Persson et~al.(2014)Persson, Fhager, Trefn{\`a}, Yu, McKelvey,
  Pegenius, Karlsson, and Elam}]{PFTYMPKE}
\bibinfo{author}{M.~Persson}, \bibinfo{author}{A.~Fhager},
  \bibinfo{author}{H.~D. Trefn{\`a}}, \bibinfo{author}{Y.~Yu},
  \bibinfo{author}{T.~McKelvey}, \bibinfo{author}{G.~Pegenius},
  \bibinfo{author}{J.-E. Karlsson}, \bibinfo{author}{M.~Elam},
  \bibinfo{title}{Microwave-based stroke diagnosis making global prehospital
  thrombolytic treatment possible}, \bibinfo{journal}{IEEE Trans. Biomed. Eng.}
  \bibinfo{volume}{61} (\bibinfo{year}{2014}) \bibinfo{pages}{2806--2817}.

\bibitem[{Salucci et~al.(2017)Salucci, Vrba, Merunka, and Massa}]{SVMM}
\bibinfo{author}{M.~Salucci}, \bibinfo{author}{J.~Vrba},
  \bibinfo{author}{I.~Merunka}, \bibinfo{author}{A.~Massa},
  \bibinfo{title}{Real-time brain stroke detection through a
  learning-by-examples technique-- An experimental assessment},
  \bibinfo{journal}{Microw. Opt. Technol. Lett.} \bibinfo{volume}{59}
  (\bibinfo{year}{2017}) \bibinfo{pages}{2796--2799}.

\bibitem[{Gao et~al.(2000)Gao, Collins, Garber, Geng, and Carin}]{GCGGC}
\bibinfo{author}{P.~Gao}, \bibinfo{author}{L.~Collins}, \bibinfo{author}{P.~M.
  Garber}, \bibinfo{author}{N.~Geng}, \bibinfo{author}{L.~Carin},
  \bibinfo{title}{Classification of landmine-like metal targets using wideband
  electromagnetic induction}, \bibinfo{journal}{IEEE Trans. Geosci. Remote
  Sens.} \bibinfo{volume}{38} (\bibinfo{year}{2000})
  \bibinfo{pages}{1352--1361}.

\bibitem[{Halman et~al.(1998)Halman, Shubert, and Ruck}]{HSR}
\bibinfo{author}{J.~I. Halman}, \bibinfo{author}{K.~A. Shubert},
  \bibinfo{author}{G.~T. Ruck}, \bibinfo{title}{{SAR} processing of
  ground-penetrating radar data for buried {UXO} detection: results from a
  surface-based system}, \bibinfo{journal}{IEEE Trans. Antennas Propag.}
  \bibinfo{volume}{46} (\bibinfo{year}{1998}) \bibinfo{pages}{1023--1027}.

\bibitem[{Torrione et~al.(2006)Torrione, Throckmorton, and Collins}]{TTC}
\bibinfo{author}{P.~A. Torrione}, \bibinfo{author}{C.~S. Throckmorton},
  \bibinfo{author}{L.~M. Collins}, \bibinfo{title}{Performance of an adaptive
  feature-based processor for a wideband ground penetrating radar system},
  \bibinfo{journal}{IEEE Trans. Aerosp. Electron. Syst.} \bibinfo{volume}{42}
  (\bibinfo{year}{2006}) \bibinfo{pages}{644--658}.

\bibitem[{Chang et~al.(2018)Chang, Peng, and Liu}]{CPL}
\bibinfo{author}{Q.~Chang}, \bibinfo{author}{T.~Peng},
  \bibinfo{author}{Y.~Liu}, \bibinfo{title}{Tomographic damage imaging based on
  inverse acoustic wave propagation using k-space method with adjoint method},
  \bibinfo{journal}{Mech. Syst. Signal Proc.} \bibinfo{volume}{109}
  (\bibinfo{year}{2018}) \bibinfo{pages}{379--398}.

\bibitem[{Feng et~al.(2002)Feng, Flaviis, and Kim}]{FFK}
\bibinfo{author}{M.~Q. Feng}, \bibinfo{author}{F.~D. Flaviis},
  \bibinfo{author}{Y.~J. Kim}, \bibinfo{title}{Use of microwaves for damage
  detection of fiber reinforced polymer-wrapped concrete structures},
  \bibinfo{journal}{J. Eng. Mech.} \bibinfo{volume}{128} (\bibinfo{year}{2002})
  \bibinfo{pages}{172--183}.

\bibitem[{Kim et~al.(2003)Kim, Jofre, Flaviis, and Feng}]{KJFF}
\bibinfo{author}{Y.~J. Kim}, \bibinfo{author}{L.~Jofre}, \bibinfo{author}{F.~D.
  Flaviis}, \bibinfo{author}{M.~Q. Feng}, \bibinfo{title}{Microwave reflection
  tomographic array for damage detection of civil structures},
  \bibinfo{journal}{IEEE Trans. Antennas Propag.} \bibinfo{volume}{51}
  (\bibinfo{year}{2003}) \bibinfo{pages}{3022--3032}.

\bibitem[{V{\"o}lker and Shokouhi(2015)}]{VS}
\bibinfo{author}{C.~V{\"o}lker}, \bibinfo{author}{P.~Shokouhi},
  \bibinfo{title}{Multi sensor data fusion approach for automatic honeycomb
  detection in concrete}, \bibinfo{journal}{NDT \& E Int.} \bibinfo{volume}{71}
  (\bibinfo{year}{2015}) \bibinfo{pages}{54--60}.

\bibitem[{Aucejo and Smet(2017)}]{AS3}
\bibinfo{author}{M.~Aucejo}, \bibinfo{author}{O.~D. Smet}, \bibinfo{title}{A
  multiplicative regularization for force reconstruction},
  \bibinfo{journal}{Mech.} \bibinfo{volume}{85} (\bibinfo{year}{2017})
  \bibinfo{pages}{730--745}.

\bibitem[{Joachimowicz et~al.(1991)Joachimowicz, Pichot, and Hugonin}]{JPH}
\bibinfo{author}{N.~Joachimowicz}, \bibinfo{author}{C.~Pichot},
  \bibinfo{author}{J.~P. Hugonin}, \bibinfo{title}{Inverse scattering: an
  iterative numerical method for electromagnetic imaging},
  \bibinfo{journal}{IEEE Trans. Antennas Propag.}
  \bibinfo{volume}{39}~(\bibinfo{number}{12}) (\bibinfo{year}{1991})
  \bibinfo{pages}{1742--1753}.

\bibitem[{Rub{\ae}k et~al.(2007)Rub{\ae}k, Meaney, Meincke, and Paulsen}]{RMMP}
\bibinfo{author}{T.~Rub{\ae}k}, \bibinfo{author}{P.~M. Meaney},
  \bibinfo{author}{P.~Meincke}, \bibinfo{author}{K.~D. Paulsen},
  \bibinfo{title}{Nonlinear microwave imaging for breast-cancer screening using
  {G}auss--{N}ewton's method and the {CGLS} inversion algorithm},
  \bibinfo{journal}{IEEE Trans. Antennas Propag.}
  \bibinfo{volume}{55}~(\bibinfo{number}{8}) (\bibinfo{year}{2007})
  \bibinfo{pages}{2320--2331}.

\bibitem[{Souvorov et~al.(1998)Souvorov, Bulyshev, Semenov, Svenson, Nazarov,
  Sizov, and Tatsis}]{SBSSNST}
\bibinfo{author}{A.~E. Souvorov}, \bibinfo{author}{A.~E. Bulyshev},
  \bibinfo{author}{S.~Y. Semenov}, \bibinfo{author}{R.~H. Svenson},
  \bibinfo{author}{A.~G. Nazarov}, \bibinfo{author}{Y.~E. Sizov},
  \bibinfo{author}{G.~P. Tatsis}, \bibinfo{title}{Microwave tomography: a
  two-dimensional Newton iterative scheme}, \bibinfo{journal}{IEEE Trans.
  Microwave Theory Tech.} \bibinfo{volume}{46} (\bibinfo{year}{1998})
  \bibinfo{pages}{1654--1659}.

\bibitem[{Chandra et~al.(2015{\natexlab{a}})Chandra, Johansson, Gustafsson, and
  Tufvesson}]{CJGT}
\bibinfo{author}{R.~Chandra}, \bibinfo{author}{A.~J. Johansson},
  \bibinfo{author}{M.~Gustafsson}, \bibinfo{author}{F.~Tufvesson},
  \bibinfo{title}{A microwave imaging-based technique to localize an in-body
  {RF} source for biomedical applications}, \bibinfo{journal}{IEEE Trans.
  Biomed. Eng.} \bibinfo{volume}{62}~(\bibinfo{number}{5})
  (\bibinfo{year}{2015}{\natexlab{a}}) \bibinfo{pages}{1231--1241}.

\bibitem[{Colton and Monk(1995)}]{CM2}
\bibinfo{author}{D.~Colton}, \bibinfo{author}{P.~Monk}, \bibinfo{title}{The
  detection and monitoring of leukemia using electromagnetic waves: Numerical
  analysis}, \bibinfo{journal}{Inverse Problems} \bibinfo{volume}{11}
  (\bibinfo{year}{1995}) \bibinfo{pages}{329--341}.

\bibitem[{Kwon et~al.(2002)Kwon, Seo, and Yoon}]{KSY}
\bibinfo{author}{O.~Kwon}, \bibinfo{author}{J.~K. Seo}, \bibinfo{author}{J.-R.
  Yoon}, \bibinfo{title}{A real-time algorithm for the location search of
  discontinuous conductivities with one measurement}, \bibinfo{journal}{Comm.
  Pur. Appl. Math.} \bibinfo{volume}{55} (\bibinfo{year}{2002})
  \bibinfo{pages}{1--29}.

\bibitem[{Aghasi et~al.(2011)Aghasi, Kilmer, and Miller}]{AKM}
\bibinfo{author}{A.~Aghasi}, \bibinfo{author}{M.~Kilmer},
  \bibinfo{author}{E.~L. Miller}, \bibinfo{title}{Parametric level set methods
  for inverse problems}, \bibinfo{journal}{SIAM J. Imag. Sci.}
  \bibinfo{volume}{4} (\bibinfo{year}{2011}) \bibinfo{pages}{618--650}.

\bibitem[{Dorn and Lesselier(2006)}]{DL}
\bibinfo{author}{O.~Dorn}, \bibinfo{author}{D.~Lesselier},
  \bibinfo{title}{Level set methods for inverse scattering},
  \bibinfo{journal}{Inverse Problems} \bibinfo{volume}{22}
  (\bibinfo{year}{2006}) \bibinfo{pages}{R67--R131}.

\bibitem[{Santosa(1996)}]{S1}
\bibinfo{author}{F.~Santosa}, \bibinfo{title}{A level-set approach for inverse
  problems involving obstacles}, \bibinfo{journal}{ESAIM: Control Optim. Calc.
  Var.} \bibinfo{volume}{1} (\bibinfo{year}{1996}) \bibinfo{pages}{17--33}.

\bibitem[{Ammari et~al.(2013)Ammari, Garapon, Jouve, Kang, Lim, and
  Yu}]{AGJKLY}
\bibinfo{author}{H.~Ammari}, \bibinfo{author}{P.~Garapon},
  \bibinfo{author}{F.~Jouve}, \bibinfo{author}{H.~Kang},
  \bibinfo{author}{M.~Lim}, \bibinfo{author}{S.~Yu}, \bibinfo{title}{A new
  optimal control approach for the reconstruction of extended inclusions},
  \bibinfo{journal}{SIAM J. Control. Optim.} \bibinfo{volume}{51}
  (\bibinfo{year}{2013}) \bibinfo{pages}{1372--1394}.

\bibitem[{Ammari et~al.(2012{\natexlab{a}})Ammari, Garnier, Kang, Lim, and
  S{\o}lna}]{AGKLS}
\bibinfo{author}{H.~Ammari}, \bibinfo{author}{J.~Garnier},
  \bibinfo{author}{H.~Kang}, \bibinfo{author}{M.~Lim},
  \bibinfo{author}{K.~S{\o}lna}, \bibinfo{title}{Multistatic imaging of
  extended targets}, \bibinfo{journal}{SIAM J. Imag. Sci.}
  \bibinfo{volume}{5}~(\bibinfo{number}{2})
  (\bibinfo{year}{2012}{\natexlab{a}}) \bibinfo{pages}{564--600}.

\bibitem[{Hou et~al.(2004)Hou, S{\o}lna, and Zhao}]{HSZ3}
\bibinfo{author}{S.~Hou}, \bibinfo{author}{K.~S{\o}lna},
  \bibinfo{author}{H.~Zhao}, \bibinfo{title}{Imaging of location and geometry
  for extended targets using the response matrix}, \bibinfo{journal}{J. Comput.
  Phys.} \bibinfo{volume}{199} (\bibinfo{year}{2004})
  \bibinfo{pages}{317--338}.

\bibitem[{Ammari et~al.(2008)Ammari, Kang, Kim, Louati, and Vogelius}]{AKKLV}
\bibinfo{author}{H.~Ammari}, \bibinfo{author}{H.~Kang},
  \bibinfo{author}{E.~Kim}, \bibinfo{author}{K.~Louati},
  \bibinfo{author}{M.~Vogelius}, \bibinfo{title}{A {MUSIC}-type algorithm for
  detecting internal corrosion from electrostatic boundary measurements},
  \bibinfo{journal}{Numer. Math.} \bibinfo{volume}{108} (\bibinfo{year}{2008})
  \bibinfo{pages}{501--528}.

\bibitem[{Kirsch(2002)}]{K1}
\bibinfo{author}{A.~Kirsch}, \bibinfo{title}{The {MUSIC} algorithm and the
  factorization method in inverse scattering theory for inhomogeneous media},
  \bibinfo{journal}{Inverse Problems} \bibinfo{volume}{18}
  (\bibinfo{year}{2002}) \bibinfo{pages}{1025--1040}.

\bibitem[{Park(2015{\natexlab{a}})}]{P-MUSIC1}
\bibinfo{author}{W.-K. Park}, \bibinfo{title}{Asymptotic properties of
  {MUSIC}-type imaging in two-dimensional inverse scattering from thin
  electromagnetic inclusions}, \bibinfo{journal}{SIAM J. Appl. Math.}
  \bibinfo{volume}{75}~(\bibinfo{number}{1})
  (\bibinfo{year}{2015}{\natexlab{a}}) \bibinfo{pages}{209--228}.

\bibitem[{Chow et~al.(2015)Chow, Ito, Liu, and Zou}]{CILZ}
\bibinfo{author}{Y.~T. Chow}, \bibinfo{author}{K.~Ito},
  \bibinfo{author}{K.~Liu}, \bibinfo{author}{J.~Zou}, \bibinfo{title}{Direct
  sampling method for diffusive optical tomography}, \bibinfo{journal}{SIAM J.
  Sci. Comput.} \bibinfo{volume}{37}~(\bibinfo{number}{4})
  (\bibinfo{year}{2015}) \bibinfo{pages}{A1658--A1684}.

\bibitem[{Ito et~al.(2012)Ito, Jin, and Zou}]{IJZ1}
\bibinfo{author}{K.~Ito}, \bibinfo{author}{B.~Jin}, \bibinfo{author}{J.~Zou},
  \bibinfo{title}{A direct sampling method to an inverse medium scattering
  problem}, \bibinfo{journal}{Inverse Problems}
  \bibinfo{volume}{28}~(\bibinfo{number}{2}) (\bibinfo{year}{2012})
  \bibinfo{pages}{025003}.

\bibitem[{Ito et~al.(2013)Ito, Jin, and Zou}]{IJZ2}
\bibinfo{author}{K.~Ito}, \bibinfo{author}{B.~Jin}, \bibinfo{author}{J.~Zou},
  \bibinfo{title}{A direct sampling method for inverse electromagnetic medium
  scattering}, \bibinfo{journal}{Inverse Problems}
  \bibinfo{volume}{29}~(\bibinfo{number}{9}) (\bibinfo{year}{2013})
  \bibinfo{pages}{095018}.

\bibitem[{Cakoni and Colton(2003)}]{CC}
\bibinfo{author}{F.~Cakoni}, \bibinfo{author}{D.~Colton}, \bibinfo{title}{The
  linear sampling method for cracks}, \bibinfo{journal}{Inverse Problems}
  \bibinfo{volume}{19} (\bibinfo{year}{2003}) \bibinfo{pages}{279--295}.

\bibitem[{Charalambopoulos et~al.(2002)Charalambopoulos, Gintides, and
  Kiriaki}]{CGK}
\bibinfo{author}{A.~Charalambopoulos}, \bibinfo{author}{D.~Gintides},
  \bibinfo{author}{K.~Kiriaki}, \bibinfo{title}{The linear sampling method for
  the transmission problem in three-dimensional linear elasticity},
  \bibinfo{journal}{Inverse Problems} \bibinfo{volume}{18}
  (\bibinfo{year}{2002}) \bibinfo{pages}{547--558}.

\bibitem[{Kirsch and Ritter(2000)}]{KR}
\bibinfo{author}{A.~Kirsch}, \bibinfo{author}{S.~Ritter}, \bibinfo{title}{A
  linear sampling method for inverse scattering from an open arc},
  \bibinfo{journal}{Inverse Problems}
  \bibinfo{volume}{16}~(\bibinfo{number}{1}) (\bibinfo{year}{2000})
  \bibinfo{pages}{89--105}.

\bibitem[{Guzina and Pourahmadian(2015)}]{GP}
\bibinfo{author}{B.~Guzina}, \bibinfo{author}{F.~Pourahmadian},
  \bibinfo{title}{Why the high-frequency inverse scattering by topological
  sensitivity may work}, \bibinfo{journal}{Proc. Roy. Soc. A.}
  \bibinfo{volume}{471} (\bibinfo{year}{2015}) \bibinfo{pages}{20150187}.

\bibitem[{Lou{\"e}r and Rap{\'u}n(2017)}]{LR}
\bibinfo{author}{F.~L. Lou{\"e}r}, \bibinfo{author}{M.-L. Rap{\'u}n},
  \bibinfo{title}{Topological sensitivity for solving inverse multiple
  scattering problems in 3{D} electromagnetism. Part {I}: one step method},
  \bibinfo{journal}{SIAM J. Imag. Sci.}
  \bibinfo{volume}{10}~(\bibinfo{number}{3}) (\bibinfo{year}{2017})
  \bibinfo{pages}{1291--1321}.

\bibitem[{Park(2012)}]{P-TD3}
\bibinfo{author}{W.-K. Park}, \bibinfo{title}{Topological derivative strategy
  for one-step iteration imaging of arbitrary shaped thin, curve-like
  electromagnetic inclusions}, \bibinfo{journal}{J. Comput. Phys.}
  \bibinfo{volume}{231} (\bibinfo{year}{2012}) \bibinfo{pages}{1426--1439}.

\bibitem[{Ammari et~al.(2011)Ammari, Garnier, Kang, Park, and S{\o}lna}]{AGKPS}
\bibinfo{author}{H.~Ammari}, \bibinfo{author}{J.~Garnier},
  \bibinfo{author}{H.~Kang}, \bibinfo{author}{W.-K. Park},
  \bibinfo{author}{K.~S{\o}lna}, \bibinfo{title}{Imaging schemes for perfectly
  conducting cracks}, \bibinfo{journal}{SIAM J. Appl. Math.}
  \bibinfo{volume}{71}~(\bibinfo{number}{1}) (\bibinfo{year}{2011})
  \bibinfo{pages}{68--91}.

\bibitem[{Ammari et~al.(2012{\natexlab{b}})Ammari, Garnier, and
  S{\o}lna}]{AGS1}
\bibinfo{author}{H.~Ammari}, \bibinfo{author}{J.~Garnier},
  \bibinfo{author}{K.~S{\o}lna}, \bibinfo{title}{A statistical approach to
  target detection and localization in the presence of noise},
  \bibinfo{journal}{Waves Random Complex Media} \bibinfo{volume}{22}
  (\bibinfo{year}{2012}{\natexlab{b}}) \bibinfo{pages}{40--65}.

\bibitem[{Borcea et~al.(2008)Borcea, Papanicolaou, and Vasquez}]{BPV}
\bibinfo{author}{L.~Borcea}, \bibinfo{author}{G.~Papanicolaou},
  \bibinfo{author}{F.~G. Vasquez}, \bibinfo{title}{Edge illumination and
  imaging of extended reflectors}, \bibinfo{journal}{SIAM J. Imag. Sci.}
  \bibinfo{volume}{1} (\bibinfo{year}{2008}) \bibinfo{pages}{75--114}.

\bibitem[{Hou et~al.(2009)Hou, Huang, S{\o}lna, and Zhao}]{HHSZ}
\bibinfo{author}{S.~Hou}, \bibinfo{author}{K.~Huang},
  \bibinfo{author}{K.~S{\o}lna}, \bibinfo{author}{H.~Zhao}, \bibinfo{title}{A
  phase and space coherent direct imaging method}, \bibinfo{journal}{J. Acoust.
  Soc. Am.} \bibinfo{volume}{125} (\bibinfo{year}{2009})
  \bibinfo{pages}{227--238}.

\bibitem[{Park(2015{\natexlab{b}})}]{P-SUB3}
\bibinfo{author}{W.-K. Park}, \bibinfo{title}{Multi-frequency subspace
  migration for imaging of perfectly conducting, arc-like cracks in full- and
  limited-view inverse scattering problems}, \bibinfo{journal}{J. Comput.
  Phys.} \bibinfo{volume}{283} (\bibinfo{year}{2015}{\natexlab{b}})
  \bibinfo{pages}{52--80}.

\bibitem[{Park et~al.(2017)Park, Kim, Lee, and Son}]{PKLS}
\bibinfo{author}{W.-K. Park}, \bibinfo{author}{H.~P. Kim},
  \bibinfo{author}{K.-J. Lee}, \bibinfo{author}{S.-H. Son},
  \bibinfo{title}{{MUSIC} algorithm for location searching of dielectric
  anomalies from ${S}-$parameters using microwave imaging},
  \bibinfo{journal}{J. Comput. Phys.} \bibinfo{volume}{348}
  (\bibinfo{year}{2017}) \bibinfo{pages}{259--270}.

\bibitem[{Park(2018{\natexlab{a}})}]{P-DSM2}
\bibinfo{author}{W.-K. Park}, \bibinfo{title}{Direct sampling method for
  anomaly imaging from scattering parameter}, \bibinfo{journal}{Appl. Math.
  Lett.} \bibinfo{volume}{81} (\bibinfo{year}{2018}{\natexlab{a}})
  \bibinfo{pages}{63--71}.

\bibitem[{Park(2018{\natexlab{b}})}]{P-SUB9}
\bibinfo{author}{W.-K. Park}, \bibinfo{title}{Reconstruction of thin
  electromagnetic inhomogeneity without diagonal elements of a multi-static
  response matrix}, \bibinfo{journal}{Inverse Problems}
  (\bibinfo{year}{2018}{\natexlab{b}}) \bibinfo{pages}{in press}.

\bibitem[{Ammari and Kang(2004)}]{AK2}
\bibinfo{author}{H.~Ammari}, \bibinfo{author}{H.~Kang},
  \bibinfo{title}{Reconstruction of Small Inhomogeneities from Boundary
  Measurements}, vol. \bibinfo{volume}{1846} of \emph{\bibinfo{series}{Lecture
  Notes in Mathematics}}, \bibinfo{publisher}{Springer-Verlag},
  \bibinfo{address}{Berlin}, \bibinfo{year}{2004}.

\bibitem[{Hou et~al.(2006)Hou, S{\o}lna, and Zhao}]{HSZ1}
\bibinfo{author}{S.~Hou}, \bibinfo{author}{K.~S{\o}lna},
  \bibinfo{author}{H.~Zhao}, \bibinfo{title}{A direct imaging algorithm for
  extended targets}, \bibinfo{journal}{Inverse Problems} \bibinfo{volume}{22}
  (\bibinfo{year}{2006}) \bibinfo{pages}{1151--1178}.

\bibitem[{Chandra et~al.(2015{\natexlab{b}})Chandra, Zhou, Balasingham, and
  Narayanan}]{CZBN}
\bibinfo{author}{R.~Chandra}, \bibinfo{author}{H.~Zhou},
  \bibinfo{author}{I.~Balasingham}, \bibinfo{author}{R.~M. Narayanan},
  \bibinfo{title}{On the opportunities and challenges in microwave medical
  sensing and imaging}, \bibinfo{journal}{IEEE Trans. Biomed. Eng.}
  \bibinfo{volume}{62}~(\bibinfo{number}{7})
  (\bibinfo{year}{2015}{\natexlab{b}}) \bibinfo{pages}{1667--1682}.

\bibitem[{Ammari et~al.(2007)Ammari, Iakovleva, Lesselier, and
  Perrusson}]{AILP}
\bibinfo{author}{H.~Ammari}, \bibinfo{author}{E.~Iakovleva},
  \bibinfo{author}{D.~Lesselier}, \bibinfo{author}{G.~Perrusson},
  \bibinfo{title}{{MUSIC} type electromagnetic imaging of a collection of small
  three-dimensional inclusions}, \bibinfo{journal}{SIAM J. Sci. Comput.}
  \bibinfo{volume}{29}~(\bibinfo{number}{2}) (\bibinfo{year}{2007})
  \bibinfo{pages}{674--709}.

\bibitem[{Ammari et~al.(2001)Ammari, Vogelius, and Volkov}]{AVV}
\bibinfo{author}{H.~Ammari}, \bibinfo{author}{M.~Vogelius},
  \bibinfo{author}{D.~Volkov}, \bibinfo{title}{Asymptotic formulas for
  perturbations in the electromagnetic fields due to the presence of
  imperfections of small diameter {II}. The full {M}axwell equations},
  \bibinfo{journal}{J. Math. Pures Appl.} \bibinfo{volume}{80}
  (\bibinfo{year}{2001}) \bibinfo{pages}{769--814}.

\bibitem[{Slaney et~al.(1984)Slaney, Kak, and Larsen}]{SKL}
\bibinfo{author}{M.~Slaney}, \bibinfo{author}{A.~C. Kak},
  \bibinfo{author}{L.~E. Larsen}, \bibinfo{title}{Limitations of imaging with
  first-order diffraction tomography}, \bibinfo{journal}{IEEE Trans. Microwave
  Theory Tech.} \bibinfo{volume}{32}~(\bibinfo{number}{8})
  (\bibinfo{year}{1984}) \bibinfo{pages}{860--874}.

\bibitem[{Park(2010)}]{P-SUB5}
\bibinfo{author}{W.-K. Park}, \bibinfo{title}{On the imaging of thin dielectric
  inclusions buried within a half-space}, \bibinfo{journal}{Inverse Problems}
  \bibinfo{volume}{26} (\bibinfo{year}{2010}) \bibinfo{pages}{074008}.

\bibitem[{Park and Lesselier(2009)}]{PL3}
\bibinfo{author}{W.-K. Park}, \bibinfo{author}{D.~Lesselier},
  \bibinfo{title}{{MUSIC}-type imaging of a thin penetrable inclusion from its
  far-field multi-static response matrix}, \bibinfo{journal}{Inverse Problems}
  \bibinfo{volume}{25} (\bibinfo{year}{2009}) \bibinfo{pages}{075002}.

\bibitem[{{\'A}lvarez et~al.(2009){\'A}lvarez, Dorn, Irishina, and
  Moscoso}]{ADIM}
\bibinfo{author}{D.~{\'A}lvarez}, \bibinfo{author}{O.~Dorn},
  \bibinfo{author}{N.~Irishina}, \bibinfo{author}{M.~Moscoso},
  \bibinfo{title}{Crack reconstruction using a level-set strategy},
  \bibinfo{journal}{J. Comput. Phys.} \bibinfo{volume}{228}
  (\bibinfo{year}{2009}) \bibinfo{pages}{5710--5721}.

\bibitem[{van~den Berg and Kleinman(1997)}]{BK1}
\bibinfo{author}{P.~M. van~den Berg}, \bibinfo{author}{R.~E. Kleinman},
  \bibinfo{title}{A contrast source inversion method},
  \bibinfo{journal}{Inverse Problems} \bibinfo{volume}{13}
  (\bibinfo{year}{1997}) \bibinfo{pages}{1607--1620}.

\bibitem[{Chew and Wang(1990)}]{CW}
\bibinfo{author}{W.~C. Chew}, \bibinfo{author}{Y.~M. Wang},
  \bibinfo{title}{Reconstruction of two-dimensional permittivity distribution
  using the distorted Born iterative method}, \bibinfo{journal}{IEEE Trans.
  Med. Imag.} \bibinfo{volume}{9}~(\bibinfo{number}{2}) (\bibinfo{year}{1990})
  \bibinfo{pages}{218--225}.

\bibitem[{Palmeri et~al.(2017)Palmeri, Bevacqua, Crocco, Isernia, and
  Donato}]{PBCID}
\bibinfo{author}{R.~Palmeri}, \bibinfo{author}{M.~T. Bevacqua},
  \bibinfo{author}{L.~Crocco}, \bibinfo{author}{T.~Isernia},
  \bibinfo{author}{L.~D. Donato}, \bibinfo{title}{Microwave imaging via
  distorted iterated virtual experiments}, \bibinfo{journal}{IEEE Trans.
  Antennas Propag.} \bibinfo{volume}{65}~(\bibinfo{number}{2})
  (\bibinfo{year}{2017}) \bibinfo{pages}{829--838}.

\bibitem[{Park and Park(2013)}]{PP2}
\bibinfo{author}{W.-K. Park}, \bibinfo{author}{T.~Park},
  \bibinfo{title}{Multi-frequency based direct location search of small
  electromagnetic inhomogeneities embedded in two-layered medium},
  \bibinfo{journal}{Comput. Phys. Commun.} \bibinfo{volume}{184}
  (\bibinfo{year}{2013}) \bibinfo{pages}{1649--1659}.

\end{thebibliography}
\end{document}